\renewenvironment{proof}[1][Proof]{\textbf{#1.} }
{\ \rule{0.5em}{0.5em}}
\DeclareMathOperator{\ad}{ad}
\DeclareMathOperator{\Ad}{Ad}
\DeclareMathOperator{\trace}{trace}
\newtheorem{theorem}{Theorem}
\newtheorem{lem}{Lemma}
\newtheorem{remark}{Remark}
\newtheorem{ques}{Question}
\begin{document}

\title[The Ricci flow on generalized Wallach spaces]
{The Ricci flow on generalized Wallach spaces}
\author{N.A. Abiev}
\address{N.A. Abiev \newline
Taraz State University after M.Kh.~Dulaty, Taraz, Tole bi
str., 60, 080000, Kazakhstan}

\email{abievn@mail.ru}

\author{A. Arvanitoyeorgos}
\address{A. Arvanitoyeorgos \newline
University of Patras, Department of Mathematics, GR-26500 Rion,
Greece} \email{arvanito@math.upatras.gr}

\author[Yu.G.~Nikonorov]{Yu.G.~Nikonorov$^{*}$}

\address{Yu.G. Nikonorov \newline
South Mathematical Institute of  Vladikavkaz Scientific Centre of
the Russian Academy of Sciences, Vladikavkaz, Markus st. 22,
362027, Russia}
\email{nikonorov2006@mail.ru}

\author{P. Siasos}
\address{P. Siasos \newline
University of Patras, Department of Mathematics, GR-26500 Rion,
Greece} \email{petroblues@yahoo.gr}

\thanks{$^*$The project was supported in part by the
State Maintenance Program for the Leading Scientific Schools of
the Russian Federation (grant NSh-921.2012.1) and by Federal Target
Grant ``Scientific and educational personnel of innovative
Russia'' for 2009-2013 (agreement no. 8206, application no. 2012-1.1-12-000-1003-014).}

\begin{abstract} We consider the asymptotic behavior of the normalized Ricci flow
on generalized Wallach spaces that could be considered as special planar dynamical systems.
All non symmetric generalized Wallach spaces
can be naturally parametrized by three
positive numbers $a_1, a_2, a_3$.
Our interest is to determine the type of singularity of all singular points of the normalized Ricci flow
on all such spaces.
Our main result gives a qualitative answer
for almost all points $(a_1,a_2,a_3)$ in the cube $(0,1/2]\times (0,1/2]\times (0,1/2]$.

\vspace{2mm} \noindent Key word and phrases:
Riemannian metric, Einstein metric, generalized Wallach space, Ricci flow, Ricci curvature, planar dynamical system, real algebraic surface.

\vspace{2mm}

\noindent {\it 2010 Mathematics Subject Classification:} 53C30, 53C44, 37C10, 34C05, 14P05.
\end{abstract}

\maketitle

\section*{Introduction}

The study of the normalized Ricci flow equation
\begin{equation}
\label{ricciflow}
\dfrac {\partial}{\partial t} \bold{g}(t) = -2 \operatorname{Ric}_{\bold{g}}+ 2{\bold{g}(t)}\frac{S_{\bold{g}}}{n}
\end{equation}
for a $1$-parameter family of Riemannian metrics $\bold{g}(t)$ in a Riemannian manifold $M^n$    was
originally used by R. Hamilton in \cite{Ham} and since then it has attracted the interest of many mathematicians
(cf. \cite{ChowKnopf}, \cite{Topping}).
Recently, there is an increasing interest towards the study of the Ricci flow (normalized or not) on
homogeneous spaces and under various perspectives
(\cite{AnasChr}, \cite{Bo}, \cite{Bu}, \cite{GP}, \cite{Isenberg}, \cite{Lauret}, \cite{Payne}
and references therein).

The aim of the present work is to study the normalized Ricci flow for invariant Riemannian
metrics on generalized Wallach spaces.
These are compact homogeneous spaces $G/H$ whose isotropy representation
decomposes into a direct sum
$\mathfrak{p}=\mathfrak{p}_1\oplus \mathfrak{p}_2\oplus \mathfrak{p}_3$ of three
$\Ad(H)$-invariant irreducible modules satisfying
$[\mathfrak{p}_i,\mathfrak{p}_i]\subset \mathfrak{h}$ $(i\in\{1,2,3\})$ (\cite{Nikonorov2}, \cite{Lomshakov2}).
For a fixed bi-invariant inner product
$\langle\cdot, \cdot\rangle$ on the Lie algebra $\mathfrak{g}$ of the Lie group $G$,
any $G$-invariant Riemannian metric $\bold{g}$ on $G/H$ is determined by an $\Ad (H)$-invariant inner product
\begin{equation}
\label{metric}
(\cdot, \cdot)=\left.x_1\langle\cdot, \cdot\rangle\right|_{\mathfrak{p}_1}+
\left.x_2\langle\cdot, \cdot\rangle\right|_{\mathfrak{p}_2}+
\left.x_3\langle\cdot, \cdot\rangle\right|_{\mathfrak{p}_3},
\end{equation}
where $x_1,x_2,x_3$ are positive real numbers.
By using expressions for the Ricci tensor and the scalar curvature in \cite{Nikonorov2}
the normalized Ricci flow equation (\ref{ricciflow})
reduces to a system of ODE's of the form
\begin{equation}
\label{three_equat}
\dfrac {dx_1}{dt} = f(x_1,x_2,x_3), \quad
\dfrac {dx_2}{dt}=g(x_1,x_2,x_3), \quad
\dfrac {dx_3}{dt}=h(x_1,x_2,x_3),
\end {equation}
where $x_i=x_i(t)>0$ $(i=1,2,3)$, are parameters of the invariant metric (\ref{metric}) and
\begin{eqnarray*}
f(x_1,x_2,x_3)&=&
-1-\frac{A}{d_1}x_1 \left( \dfrac {x_1}{x_2x_3}-  \dfrac {x_2}{x_1x_3}-
\dfrac {x_3}{x_1x_2} \right)+2x_1 \frac{S_{\bold{g}}}{n},\\
g(x_1,x_2,x_3)&=&
-1-\frac{A}{d_2}x_2 \left( \dfrac {x_2}{x_1x_3}- \dfrac {x_3}{x_1x_2} -
\dfrac {x_1}{x_2x_3} \right)+2x_2 \frac{S_{\bold{g}}}{n},\\
h(x_1,x_2,x_3)&=&
-1-\frac{A}{d_3}x_3 \left( \dfrac {x_3}{x_1x_2}-  \dfrac {x_1}{x_2x_3}-
\dfrac {x_2}{x_1x_3} \right)+2x_3 \frac{S_{\bold{g}}}{n},\\
S_{\bold{g}}&=&
\frac{1}{2}\left( \dfrac {d_1}{x_1}+\dfrac {d_2}{x_2}+\dfrac {d_3}{x_3}-
A \left( \dfrac {x_1}{x_2x_3}+  \dfrac {x_2}{x_1x_3}+ \dfrac {x_3}{x_1x_2} \right) \right).
\end{eqnarray*}
Here $d_i$, $i=1,2,3$, are the dimensions of the corresponding irreducible modules
$\mathfrak{p}_i$, $n=d_1+d_2+d_3$ and $A$ is some special nonnegative number
(see Section \ref{Wallach}).
If $A\neq 0$, then by denoting $a_i:=A/d_i>0$, $i=1,2,3$,
the functions $f,g,h$ can be expressed in a more convenient form
(independent of $A$ and $d_i$) as
\begin{eqnarray*}
f(x_1,x_2,x_3)&=&-1-a_1x_1 \left( \dfrac {x_1}{x_2x_3}-  \dfrac {x_2}{x_1x_3}- \dfrac {x_3}{x_1x_2} \right)+x_1B,\\
g(x_1,x_2,x_3)&=&-1-a_2x_2 \left( \dfrac {x_2}{x_1x_3}- \dfrac {x_3}{x_1x_2} -  \dfrac {x_1}{x_2x_3} \right)+x_2B,\\
h(x_1,x_2,x_3)&=&-1-a_3x_3 \left( \dfrac {x_3}{x_1x_2}-  \dfrac {x_1}{x_2x_3}- \dfrac {x_2}{x_1x_3} \right)+x_3B,
\end{eqnarray*}
where
$$
B:=\left( \dfrac {1}{a_1x_1}+\dfrac {1}{a_2x_2}+\dfrac {1}{a_3x_3}- \left( \dfrac {x_1}{x_2x_3}+
\dfrac {x_2}{x_1x_3}+ \dfrac {x_3}{x_1x_2} \right) \right)
\left( \frac{1}{a_1} +\frac{1}{a_2}+ \frac{1}{a_3} \right)^{-1}.
$$

It is easy to check that the volume $V=x_1^{1/a_1}x_2^{1/a_2}x_3^{1/a_3}$ is a first integral of the system (\ref {three_equat}).
Therefore,  on the surface
\begin{equation}
\label{volume}
V\equiv 1
\end{equation}
we can reduce (\ref {three_equat}) to the system of two differential equations
of the type
\begin{equation}
\label{two_equat}
\dfrac {dx_1}{dt}=\widetilde f(x_1,x_2), \quad
\dfrac {dx_2}{dt}=\widetilde g(x_1,x_2),
\end{equation}
where
\begin{eqnarray*}
\widetilde f(x_1,x_2)&\equiv & f(x_1,x_2,\varphi(x_1,x_2)),\\
\widetilde g(x_1,x_2)&\equiv & g(x_1,x_2,\varphi(x_1,x_2)), \\
\varphi(x_1,x_2)&=&x_1^{ -\tfrac {a_3}{a_1}} x_2^{- \tfrac {a_3}{a_2}}.
\end{eqnarray*}

It is known (\cite{Nikonorov2}) that every generalized Wallach space admits at least one invariant Einstein metric.
Later in \cite{Lomshakov1}, \cite{Lomshakov2} a detailed study of invariant Einstein metrics was developed
for all generalized Wallach spaces. In particular, it was shown that there are at most four invariant
Einstein metrics (up to homothety)
for every such space.
It should be noted that invariant  Einstein metrics with $V=1$ correspond to singular points of
(\ref {two_equat}), therefore,
$(x_1^0,x_2^0,x_3^0)$ is a singular point of  the system (\ref {three_equat}), (\ref{volume})
if and only if  $(x_1^0,x_2^0)$ is a singular point of  (\ref {two_equat}).
It is our interest to determine the type of singularity of such points,
and our investigation concerns this problem for some special values of the parameters $a_1, a_2$, and $a_3$.
The main result in this direction is Theorem \ref{focus_center}, which gives a qualitative answer
for almost  all points (in measure theoretic sense) $(a_1,a_2,a_3)\in (0,1/2]\times(0,1/2]\times (0,1/2]$.
Note that the latter inclusion is fulfilled for any triple $(a_1,a_2,a_3)$ corresponding to some generalized Wallach
spaces (see the next section).
However we are interested in the behavior of the dynamical system (\ref{two_equat}) for all
values $a_i \in (0,1/2]$ despite the fact that some triples may not correspond to ``real'' generalized Wallach spaces.

We expect to give a more detailed study of the system (\ref{two_equat}) for various values of the parameters
$a_1,a_2,a_3$, which could help towards a deeper understanding of the behavior of the Ricci flow on more
general homogeneous spaces.
Also, it is quite possible that the system (\ref{two_equat}) is interesting not only for the parameters
$a_i\in(0,1/2]$, but as a more general dynamical system
other than the Ricci flow.
{\it It is clear that the system (\ref{three_equat}) is naturally defined for all
values of $a_1,a_2,a_3$ with
$a_1a_2+a_1a_3+a_2a_3\neq 0$, but for the system (\ref{two_equat}) we should assume
$a_1a_2a_3\neq 0$ additionally}.

\section{Generalized Wallach spaces}\label{Wallach}

We recall the definition and important properties of generalized Wallach spaces
(cf. \cite[pp. 6346--6347]{Nikonorov1} and \cite{Nikonorov2}).

Consider a~homogeneous almost effective compact space $G/H$ with  a (compact) semisimple connected Lie group
$G$ and its closed subgroup $H$. Denote by~$\mathfrak{g}$ and~$\mathfrak{h}$ the~Lie
algebras of~$G$ and~$H$ respectively. In~what follows,
$[\boldsymbol{\cdot}\,,\boldsymbol{\cdot}]$ stands for the~Lie bracket
of~$\mathfrak{g}$ and $B(\boldsymbol{\cdot}\,,\boldsymbol{\cdot})$ stands for
the~Killing form of~$\mathfrak{g}$. Note that $\langle\boldsymbol{\cdot}
\,,\boldsymbol{\cdot}\rangle=-B(\boldsymbol{\cdot}\,,\boldsymbol{\cdot})$
is a~bi-invariant inner product on~$\mathfrak{g}$.

Consider the~orthogonal complement~$\mathfrak{p}$ of~$\mathfrak{h}$ in~$\mathfrak{g}$ with respect
to~$\langle\boldsymbol{\cdot}\,,\boldsymbol{\cdot}\rangle$. Every
$G$-invariant Riemannian metric on~$G/H$ generates an~$\Ad(H)$-invariant
inner product on~$\mathfrak{p}$ and vice versa (\cite{Bes}). Therefore, it is possible to
identify invariant Riemannian metrics on~$G/H$ with $\Ad(H)$-invariant inner
products on~$\mathfrak{p}$ (if $H$ is connected then the property to be $\Ad(H)$-invariant is equivalent
to the property to be $\ad(\mathfrak{h})$-invariant). Note that the Riemannian metric generated by the~inner product
$\langle\boldsymbol{\cdot}\,,\boldsymbol{\cdot}\rangle\bigr\vert_{\mathfrak{p}}$ is called
{\it standard} or {\it Killing}.

Let~$G/H$ be  a homogeneous space such that its isotropy representation $\mathfrak{p}$
is decomposed as a direct sum of three
$\Ad(H)$-invariant irreducible modules pairwise orthogonal
with respect to~$\langle\boldsymbol{\cdot}\,,\boldsymbol{\cdot}\rangle$, i.e.
\begin{equation*}\label{decom1}
\mathfrak{p}=\mathfrak{p}_1\oplus \mathfrak{p}_2\oplus \mathfrak{p}_3,
\end{equation*}
\begin{equation*}\label{decom2}
\mbox{with   } [\mathfrak{p}_i,\mathfrak{p}_i]\subset \mathfrak{h} \mbox{ for } i\in\{1,2,3\}.
\end{equation*}
Since this condition on each
module resembles the~condition of local symmetry for
homogeneous spaces \big(a~locally symmetric homogeneous space~$G/H$ is
characterized by the~relation $[\mathfrak{p},\mathfrak{p}]\subset \mathfrak{h}$,
where $\mathfrak{g}=\mathfrak{h}\oplus \mathfrak{p}$
and~$\mathfrak{p}$ is $\Ad(H)$-invariant~\cite{Bes}\big), then spaces with this property were called
{\it three-locally-symmetric} in \cite{Lomshakov1, Lomshakov2}.
But in this paper we prefer the term {\it generalized Wallach spaces}, as in \cite{Nikonorov1}.

There are many examples of these spaces, e.g. the~flag manifolds
$$
SU(3)/T_{\max},
\ \ Sp(3)/Sp(1)\times Sp(1)\times Sp(1),
\ \ F_4/Spin(8).
$$
These spaces (known as {\it Wallach spaces}) are interesting because
they admit invariant Riemannian metrics of positive sectional curvature
(see~\cite{Wal}). The~invariant Einstein metrics on~$SU(3)/T_{\max}$
were classified in~\cite{Dat} and,
on the~remaining two spaces,
in~\cite{Rod}. In~each of the~cases, there exist exactly
four invariant Einstein metrics (up to proportionality). Other
classes of generalized Wallach spaces are the various K\"ahler $C$-spaces such as
$$
SU(n_1+n_2+n_3)\big/S\big(U(n_1)\times U(n_2)\times U(n_3)\big),
$$
$$
SO(2n)/U(1)\times U(n-1),
\quad
E_6/U(1)\times U(1)\times Spin(8).
$$
The~invariant Einstein metrics in the~above spaces were classified
in~\cite{Kim}. Each of these spaces admits four
invariant Einstein metrics (up to scalar), one of which is K\"ahler for an~appropriate
complex structure on~$G/H$. Another approach to
$SU(n_1+n_2+n_3)\big/S\big(U(n_1)\times U(n_2)\times U(n_3)\big)$ was used
in~\cite{Ar}. The~Lie group~$SU(2)$ \big($H=\{e\}$\big) is another example
of a~generalized Wallach space. Being 3-dimensional, this group
admits only one left-invariant Einstein metric which is a~metric of constant
curvature~(\cite{Bes}).

In~\cite{Nikonorov2},  it was shown that every generalized Wallach space
admits at least one invariant Einstein metric.
This result could not be improve in general (since e.g.~$SU(2)$ admits exactly one invariant Einstein metric).
Later in \cite{Lomshakov1}, \cite{Lomshakov2} a detailed study of invariant Einstein metrics was developed
for all generalized Wallach spaces. In particular, it is proved that there are at most four
Einstein metrics (up to homothety) for every such space.

Denote by $d_i$ the~dimension of~$\mathfrak{p}_i$. Let~$\big\{e^j_i\big\}$ be
an~orthonormal basis in~$\mathfrak{p}_i$ with respect to
$\langle\boldsymbol{\cdot}\,,\boldsymbol{\cdot}\rangle$, where
$i\in\{1,2,3\}$, $1\le j\le d_i=\dim(\mathfrak{p}_i)$. Consider
the~expression $[ijk]$ defined by
the~equality
$$
[ijk]=\sum_{\alpha,\beta,\gamma}
\left\langle\big[e_i^\alpha,e_j^\beta
            \big],e_k^\gamma
\right\rangle^2,
$$
where $\alpha$, $\beta$, and $\gamma$ range from~1 to ~$d_i$, $d_j$, and
$d_k$ respectively. The~symbols $[ijk]$ are symmetric in all three indices by bi-invariance
of the~metric
$\langle\boldsymbol{\cdot}\,,\boldsymbol{\cdot}\rangle$.
Moreover, for spaces under consideration, we have
$[ijk]=0$
if two indices coincide. Therefore, the~quantity
$A:=[123]$
plays an~important role.

By \cite[Lemma 1]{Nikonorov2} we get
$d_i \geq 2A$ for every $i=1,2,3$ with $d_i =2A$ if and only if $[\mathfrak{h},\mathfrak{p}_i]=0$.

Note that $A=0$ if and only if the space $G/H$ is locally a direct product of three compact irreducible symmetric spaces
(see  \cite[Theorem 2]{Lomshakov2}).

Suppose $A\neq 0$ and
let
\begin{equation}\label{ai}
a_i=A/d_i, \quad i\in \{1,2,3\}.
\end{equation}
It is clear that $a_i \in (0,1/2]$.
Note, that not every triple $(a_1,a_2,a_3)\in (0,1/2] \times (0,1/2] \times (0,1/2]$ corresponds to some generalized
Wallach spaces. For example, if $a_i=1/2$ for some $i$, then there is no generalized Wallach space
with $a_j\neq a_k$, where $i\neq j\neq k \neq i$ (see Lemma 4 in \cite{Nikonorov2}).
Moreover, every $a_i$  should be a rational number for a generalized
Wallach space with simple group $G$ (see (\ref{ai}), \cite[Lemma 1]{Lomshakov2} and
\cite[Table 1]{DZ}).

An explicit expression for the Ricci curvature of invariant metrics (\ref{metric})
is obtained in Lemma~2 of  \cite{Nikonorov2}. It immediately implies the system (\ref{three_equat}).

\section{Description of the singular points of the system (\ref{three_equat})}\label{sinpsec}

We will first give a description of the singular points of the system
(\ref{three_equat}), as Einstein metrics on generalized Wallach spaces.
An easy calculation shows that for $a_1a_2+a_1a_3+a_2a_3\neq 0$ the singular points $(x_1,x_2,x_3)$ of the
system (\ref{three_equat}) can be found from the equations
\begin{equation}\label{two_equat_sing1}
\begin{array}{l}
(a_2+a_3)(a_1x_2^2+a_1x_3^2-x_2x_3)+(a_2x_2+a_3x_3)x_1-(a_1a_2+a_1a_3+2a_2a_3)x_1^2=0,
\\
(a_1+a_3)(a_2x_1^2+a_2x_3^2-x_1x_3)+(a_1x_1+a_3x_3)x_2-(a_1a_2+2a_1a_3+a_2a_3)x_2^2=0.\\
\end{array}
\end{equation}
If $a_1a_2a_3\neq 0$ and $x_3=\varphi(x_1,x_2)$, then we also get singular points of the system (\ref{two_equat}).
Recall that we are interesting only for singular points with $x_i>0$, $i=1,2,3$.

Note that  system (\ref{two_equat_sing1}) is homogeneous (of degree 2) with respect to
$x_1,x_2,x_3$. It is easy to see that $x_i=x_j=0$ implies $x_k=0$ or $a_i(a_j+a_k)=a_j(a_i+a_k)=0$,
$i\neq j \neq k \neq i$. If we have a solution with $x_i=1$ and $x_j=0$, then we should have
$(4a_j^2-1)(a_i+a_k)(a_1a_2+a_1a_3+a_2a_3)=0$.
Therefore, if $a_i\in (0,1/2)$ for $i=1,2,3$, then  system (\ref{two_equat_sing1})
has no solution with zero component.
If $a_i\in (0,1/2]$, $i=1,2,3$ then it is proved in \cite{Lomshakov2} that
this system has (up to multiplication by a constant, for example, if we put $x_3=1$)
at least one and at most four solutions with positive components. A detailed information on these solutions can be found
in \cite{Lomshakov2}. We briefly review these results below.
\smallskip

{\it The case where at least two of $a_i$'s are equal.}
Without loss of generality we may assume that $a_1=a_2=b$ and
$a_3=c$.  Then  system (\ref{two_equat_sing1})
is equivalent to
the~following system:
\begin{equation}\label{solcase1}
\begin{array}{l}
(x_2-x_1)\big(x_3-2b(x_1+x_2)
         \big)=0,\\
x_2(x_3-x_1)+(b+c)(x^2_1-x^2_3)+(c-b)x_2^2=0.\\
\end{array}
\end{equation}

If~$x_2=x_1$ then the~second equation of~(\ref{solcase1}) becomes
\begin{equation}\label{solcase1.1}
(1-2c)x_1^2-x_1x_3+(b+c)x_3^2=0.
\end{equation}
Thus, we have the following singular points
\begin{equation}\label{solcase1.2}
(x_1,x_2,x_3)=\big(2(b+c)q,2(b+c)q,\mu q\big),
\end{equation}
where $\mu=1\pm\sqrt{1{-}4(1{-}2c)(b{+}c)}$, $q\in \mathbb{R}$, $q >0$.
We~observe that, for $c=1/2$, we have only one family of singular points
$(x_1,x_2,x_3)=\big((b+c)q,(b+c)q,q\big)$. Otherwise,
$1-2c>0$, and all depends on the~sign of the~discriminant $D_1=1-4(1-2c)(b+c)$.
Indeed,
there exist one family of singular points for $D_1=0$,
two families for $D_1>0$, and none for $D_1<0$.

If~$x_2\ne x_1$ then $x_3=2b(x_1+x_2)$, so the~second equation of~(\ref{solcase1}) reduces to

\begin{equation}\label{solcase1.3}
(b+c)(1-4b^2)x_1^2-\big(1-2b+8b^2(b+c)
                   \big)x_1x_2+(b+c)\big(1-4b^2
                                    \big)x_2^2=0.
\end{equation}
If~$b=1/2$ then  equation (\ref{solcase1.3}) has no solution. Otherwise, $1-4b^2>0$,
hence, all real roots of the~equation~(\ref{solcase1.3}) are positive.
The~discriminant~$D_2$ of~(\ref{solcase1.3}) has the~same sign as $T:=1-4b-2c+16b^2(b+c)$.
Thus, there exist one family of singular points for~$T=0$,
two families for $T>0$, and none for $T<0$.

In particular, if $a_1=a_2=a_3=a$, $a\in (0,1/2)$, then, for
$a\neq 1/4$ we get exactly four singular points $(x_1,x_2,x_3)$ up to a positive multiple, namely
$(1,1,1)$, $(1-2a,2a,2a)$, $(2a,1-2a,2a)$, or
$(2a,2a,1-2a)$. For~$a=1/4$ we get only singular points proportional to $(1,1,1)$.
\smallskip

{\it The case of pairwise distinct $a_i$'s.} We consider two subcases here.

{\it The case $a_1+a_2+a_3=1/2$.}
Then all singular points~$(x_1,x_2,x_3)$ have the~form
\begin{equation}\label{solcase2}
\aligned
&\big((1-2a_1)q,\,(1-2a_2)q,\,2(a_1+a_2)q
 \big),\\
&\big((1-2a_1)q,\,(1-2a_2)q,\,2(1-a_1-a_2)q
 \big),\\
&\big((1-2a_1)q,\,(1+2a_2)q,\,2(a_1+a_2)q
 \big),\\
&\big((1+2a_1)q,\,(1-2a_2)q,\,2(a_1+a_2)q
 \big),
\endaligned\qquad\text{where}\ \ q\in \mathbb{R},\, q >0 \,.
\end{equation}

{\it The case $a_1+a_2+a_3\ne 1/2$.}
We  look for  singular points of the form $(x_1,x_2,x_3)=(1,t,s)$.
Then~system~(\ref{two_equat_sing1})
 be reduces to
an equation of degree $4$ either in~$s$ or in~$t$. By eliminating the~summand
containing~$t^2$, we obtain
the~following system equivalent to~(\ref{two_equat_sing1}):
\begin{equation}\label{solcase3}
\begin{array}{l}
\bigl( (a_2+a_3)s -(a_1+a_2) \bigr) t=
2a_1(a_2+a_3)s^2+(a_3-a_1)s-2a_3(a_1+a_2),\\
(a_2+a_3)t^2-(a_2+a_3)s^2+s-t+a_3-a_2=0.
\end{array}
\end{equation}

It is easy to see that
$(a_2+a_3)s-(a_1+a_2)\ne 0$
(see details in \cite{Lomshakov2}).
By expressing~$t$ from the~first equation of~(\ref{solcase3})
and inserting it into the~second, we obtain the~following
equation of degree $4$:
\begin{eqnarray}\label{solcase3.1}
(a_2+a_3)^2(2a_1-1)(2a_1+1)s^4+(a_2+a_3)(2a_2+4a_1a_3+1-4a^2_1)s^3 \notag\\
+\Big(2a^2_1{+}2a^2_3{-}8a_1a^2_2a_3{-}2a^2_2{-}
      8a^2_1a_2a_3{-}2a_2{-}
      8a_1a_2a^2_3{-}2a_1a_3{-}a_1{-}a_3{-}8a^2_1a^2_3
 \Big)s^2\\
+(a_1+a_2)(4a_1a_3+
2a_2+1-4a^2_3)s+(2a_3-1)(2a_3+1)(a_1+a_2)^2=0.
\notag
\end{eqnarray}
Denote by~$D_3$ the~discriminant of the~polynomial
in~the left-hand side of~(\ref{solcase3.1}). It can be shown (\cite{Lomshakov2}) that all~real solutions of~(\ref{solcase3.1}) are positive.
For~$D_3\ne 0$, the~equation~(\ref{solcase3.1}) has either two or four distinct real solutions.
Therefore, we get two or four
families of singular points determined by~(\ref{solcase3.1}).
\smallskip

Note, that the~condition~$D_3\,{=}\,0$ holds, for example, for the homogeneous space \linebreak
$SO(20)/\bigl(SO(5)\times SO(6)\times SO(9)\bigr)$ ($a_1=5/36$, $a_2=1/6$, $a_3=1/4$). In this special case,
the~equation~(\ref{solcase3.1}) has one root of multiplicity~2 and the~space under consideration
admits exactly three pairwise nonhomothetic singular points (i.e. invariant Einstein metrics).

\section{The study of singular points}
In this section we recall some facts about the type of singular points of the system~(\ref{two_equat}).
Our basic references are \cite{Dumort} and  \cite{JiangLlible}.
The functions $\widetilde f(x_1,x_2)$ and $\widetilde g(x_1,x_2)$ (see (\ref{two_equat}))
are analytic in a neighborhood of the arbitrary point  $(x_1^0,x_2^0)$ (where $x_1^0>0$ and $x_2^0>0$) and
the following representations are valid:
$$\widetilde f(x_1,x_2) \equiv J_{11}(x_1-x_1^0)+J_{12}(x_2-x_2^0)+F(x_1,x_2),$$
$$\widetilde g(x_1,x_2) \equiv J_{21}(x_1-x_1^0)+J_{22}(x_2-x_2^0)+G(x_1,x_2),$$
where $J_{11}$, $J_{12}$, $J_{21}$ and $J_{22}$ are the elements of the Jacobian matrix

 \begin{equation} \label{linear}
J:=J(x_1^0,x_2^0)=
\left(
\begin{array}{cc}
\dfrac{\partial\widetilde f(x_1^0,x_2^0)}{\partial x_1}&\dfrac{\partial\widetilde f(x_1^0,x_2^0)}{\partial x_2} \\
\dfrac{\partial\widetilde g(x_1^0,x_2^0)}{\partial x_1}&\dfrac{\partial\widetilde g(x_1^0,x_2^0)}{\partial x_2} \\
\end{array}
\right).
\end{equation}
The functions $F$ and $G$ are also analytic in a neighborhood of the point $(x_1^0,x_2^0)$ and
\begin{eqnarray*} \label{FG_0}
F(x_1^0,x_2^0)=G(x_1^0,x_2^0)
=\dfrac  {\partial F(x_1^0,x_2^0)}{\partial x_1} =\dfrac  {\partial F(x_1^0,x_2^0)}{\partial x_2} =
\dfrac  {\partial G(x_1^0,x_2^0)}{\partial x_1} =\dfrac  {\partial G(x_1^0,x_2^0)}{\partial x_2} =0.
\end{eqnarray*}

The eigenvalues of $J(x_1^0,x_2^0)$ can be found from the formula
 \begin{equation*} \label {eigen}
\lambda_{1,2}=\frac{\rho \pm \sqrt {\sigma}}{2}, \quad (|\lambda_1 | \leq |\lambda_2|),
 \end{equation*}
where
\begin{equation}\label{sirhodel}
\sigma:=\rho^2-4\delta, \quad \rho:=\trace\left(J(x_1^0,x_2^0) \right), \quad
\delta:=\det \left( J(x_1^0,x_2^0) \right).
\end{equation}
We will use  these notations in the sequel.
\bigskip

In the non-degenerate case ($\delta=\lambda_1\lambda_2 \ne 0$) we will use the following theorem.
\begin{theorem}[Theorem 2.15 in \cite{Dumort}] \label{nondegenerate}
Let $(0,0)$ be an isolated singular point of the system
$$
\dfrac {dx}{dt} =ax+by + A(x,y), \quad
\dfrac {dy}{dt}=cx+dy+B(x,y),
$$
where $A$ and $B$ are analytic in a neighborhood of the origin with
$$
A(0,0)=B(0,0)=\dfrac  {\partial A(0,0)}{\partial x} =\dfrac  {\partial A(0,0)}{\partial y} =
\dfrac  {\partial B(0,0)}{\partial x} =\dfrac  {\partial B(0,0)}{\partial y} =0.
$$
Let $\lambda_1$ and $\lambda_2$  be the eigenvalues of the matrix
$
\left(
\begin{array}{cc}
a&b\\
c&d
\end{array}
\right)
$
which represents the linear part of the system at the origin. Then the following statements hold:

(i) If $\lambda_1$ and $\lambda_2$ are real and $\lambda_1\lambda_2<0$, then $(0,0)$ is a saddle.

(ii) If  $\lambda_1$ and $\lambda_2$ are real with $|\lambda_1 | \leq |\lambda_2|$ and $\lambda_1\lambda_2>0$,
then $(0,0)$ is a node.

If $\lambda_1>0$ (respectively $<0$) then it is unstable (respectively stable).

(iii) If $\lambda_1=\alpha+ i \beta $ and  $\lambda_2=\alpha- i \beta $ with $\alpha,\beta \ne 0$, then $(0,0)$ is a strong focus.

(iv) If $\lambda_1=i \beta $ and  $\lambda_2=- i \beta $ with $\beta \ne 0$, then $(0,0)$ is a weak focus or a center.
\end{theorem}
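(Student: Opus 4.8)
The plan is to prove the statement case by case, reducing each to a classical fact about the local phase portrait near a singular point: the Hartman--Grobman theorem and its sharper planar ($C^{1}$-linearization) version for the hyperbolic cases (i)--(iii), and the center--focus alternative via the Poincar\'e first-return map for the non-hyperbolic case (iv). In cases (i)--(iii) the matrix $L$ of the linear part has no eigenvalue on the imaginary axis, so the origin is hyperbolic and, by the Hartman--Grobman theorem, the flow of the full system is locally topologically conjugate to that of $\dot u = Lu$. This already gives the topological picture --- a saddle when $\lambda_1\lambda_2<0$ and a topological sink or source otherwise --- and, via the principle of linearized stability, the stability claims of (ii): asymptotically stable when $\lambda_1<0$, completely unstable when $\lambda_1>0$ (the signs of $\lambda_1$ and $\lambda_2$ agree since $\lambda_1\lambda_2>0$). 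To upgrade this to the precise ``saddle / node / focus'' trichotomy one notes that topological conjugacy is not enough --- a stable node and a stable focus are topologically conjugate --- and invokes instead Hartman's planar linearization theorem ($C^{k}$, $k\ge 1$, planar vector fields are $C^{1}$-conjugate to $L$ near a hyperbolic singular point), under which a node maps to a node and a focus to a focus; equivalently one argues directly: for (i) the stable manifold theorem yields the two analytic invariant curves $W^{s}$, $W^{u}$ tangent to the eigendirections with the usual saddle behavior between them; for (ii), after putting $L$ in the eigenbasis, a Gronwall-type comparison with the linear flow shows every orbit tends to the origin with a definite tangent direction, i.e.\ a node; for (iii), in polar coordinates $(r,\theta)$ one has $\dot\theta=\mp\beta+O(r)$, so $\theta$ is strictly monotone as $r\to 0$, i.e.\ a focus, strong because $\alpha\ne 0$ forces exponential rate.

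For case (iv) the linear part is a linear center with spectrum $\{\pm i\beta\}$, Hartman--Grobman does not apply, and the linearization is silent about stability and spiralling. I would bring $L$ to the normal form $\dot x=-\beta y$, $\dot y=\beta x$ by a real linear change of coordinates, pass to polar coordinates, and observe that $\dot\theta=\beta+O(r)\ne 0$ in a punctured neighborhood of the origin; hence every orbit winds monotonically around the origin and a Poincar\'e first-return map $P$ on a half-ray issuing from the origin is well defined and analytic. If $P=\mathrm{id}$ then all nearby orbits are closed and the origin is a center; if $P\ne\mathrm{id}$ then the sign of the first nonvanishing Taylor coefficient of the displacement $P(\rho)-\rho$ (a Lyapunov coefficient) decides between a stable and an unstable focus. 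Either way we obtain a center or a focus, weak because $\alpha=0$, which is the assertion.

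The genuinely non-routine points are that the node-versus-focus distinction cannot be read off the topological conjugacy and needs the sharper $C^{1}$ planar linearization (or a hands-on sectorial analysis), and that in case (iv) the classification is intrinsically incomplete: separating center from focus for a concrete system amounts to computing Lyapunov quantities, an unbounded computation in general. Since this theorem is quoted from \cite{Dumort} and is used in the present paper only through the computable invariants $\rho$, $\delta$, $\sigma$ of (\ref{sirhodel}), I would not carry out these refinements in detail but simply record the reductions above.
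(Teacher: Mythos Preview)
The paper does not prove this theorem at all: it is simply quoted verbatim as Theorem~2.15 of \cite{Dumort} and used as a black box. Your outline --- Hartman--Grobman plus the sharper planar $C^{1}$-linearization for the hyperbolic cases (i)--(iii), and the Poincar\'e first-return map for the center--focus alternative in case (iv) --- is the standard route and is correct as a sketch; you even flag the right subtlety, namely that topological conjugacy alone cannot distinguish a node from a focus. Since the paper itself supplies no argument, your proposal goes strictly beyond what the paper does; for the purposes of this paper, a bare citation to \cite{Dumort} is all that is expected, and your closing remark already acknowledges this.
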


Cases (i), (ii) and (iii) are known as {\it hyperbolic singular points}.

\smallskip

A direct calculation of $\delta$ and $\rho$ is often very complicated,
so we will obtain more convenient formulas for $\rho$ and $\delta$ in the case of
{\it singular points $(x_1^0,x_2^0)$ of the system (\ref{two_equat})}.
Let $\widetilde{J}$ be the Jacobian matrix
of the map
$$
(x_1,x_2,x_3) \mapsto \Bigl( f(x_1,x_2,x_3), g(x_1,x_2,x_3), h(x_1,x_2,x_3) \Bigr)
$$
and let $p(t)=t^3-\widetilde{\rho}t^2+\widetilde{\delta}t+\widetilde{\kappa}$
be the characteristic polynomial of $\widetilde{J}$.

\begin{lem} \label{delta_rhon}
If $(x_1^0,x_2^0)$ is a singular point of the system (\ref{two_equat}), then
$\rho=\widetilde{\rho}$ and $\delta=\widetilde{\delta}$, where
$p(t)=t^3-\widetilde{\rho}t^2+\widetilde{\delta}t+\widetilde{\kappa}$ is
calculated at the point $(x_1^0,x_2^0, x_3^0=\varphi(x_1^0,x_2^0))$.
\end{lem}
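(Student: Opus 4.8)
The plan is to exploit the fact that the system \eqref{two_equat} arises from the three-dimensional system \eqref{three_equat} by restriction to the invariant surface $V\equiv 1$, and that the function $V=x_1^{1/a_1}x_2^{1/a_2}x_3^{1/a_3}$ is a first integral of \eqref{three_equat}. So the vector field $(f,g,h)$ is everywhere tangent to the level surfaces of $V$, and at a singular point $P_0=(x_1^0,x_2^0,x_3^0)$ (with $x_3^0=\varphi(x_1^0,x_2^0)$) the gradient $\nabla V(P_0)$ spans a one-dimensional subspace on which $\widetilde J$ acts; in fact, differentiating $V(f,g,h)$-invariance, one gets $(\nabla V)^{\!\top}\widetilde J = 0$ at $P_0$, so $\nabla V(P_0)$ is a left eigenvector of $\widetilde J(P_0)$ with eigenvalue $0$. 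Hence one of the three roots of $p(t)$ is $0$, i.e. $\widetilde\kappa=0$, and the remaining two eigenvalues of $\widetilde J$ are exactly the eigenvalues of the restriction of $\widetilde J$ to the tangent plane $T_{P_0}\{V=1\}=\ker(\nabla V(P_0))^{\!\top}$. Their sum is $\widetilde\rho$ and their product is $\widetilde\delta$ (the two nonzero elementary symmetric functions of the roots of $t^3-\widetilde\rho t^2+\widetilde\delta t$).

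Next I would identify that restriction with the Jacobian $J$ of \eqref{two_equat}. Parametrize the surface $V\equiv 1$ by $(x_1,x_2)\mapsto(x_1,x_2,\varphi(x_1,x_2))$; this is a diffeomorphism onto a neighborhood of $P_0$ inside the surface, and under it the restricted vector field $(f,g,h)|_{V=1}$ pulls back exactly to $(\widetilde f,\widetilde g)$. Linearizing at the singular point and using the chain rule, $J$ is conjugate (via the differential of the parametrization, an isomorphism $\mathbb{R}^2\to T_{P_0}\{V=1\}$) to the restriction of $\widetilde J(P_0)$ to the invariant plane $T_{P_0}\{V=1\}$. Trace and determinant are conjugacy invariants, so $\rho=\trace(J)$ equals the sum of the two nonzero eigenvalues of $\widetilde J$, namely $\widetilde\rho$, and $\delta=\det(J)$ equals their product, namely $\widetilde\delta$. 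This yields both claimed identities simultaneously.

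The one step that needs genuine care — and which I expect to be the main obstacle — is justifying that $\nabla V(P_0)\ne 0$ and that $\widetilde J(P_0)$ genuinely leaves the plane $T_{P_0}\{V=1\}$ invariant (not merely that $\nabla V(P_0)$ is a left $0$-eigenvector). The first is immediate since $x_i^0>0$ forces all partials $\partial V/\partial x_i = (1/a_i)V/x_i^0 \ne 0$. For the second: from $V(x(t))\equiv 1$ along every trajectory of \eqref{three_equat}, differentiating once gives $\langle\nabla V, (f,g,h)\rangle\equiv 0$ on $\{V=1\}$; differentiating this identity again at the singular point, where $(f,g,h)(P_0)=0$, kills the term involving the Hessian of $V$ and leaves $(\nabla V(P_0))^{\!\top}\widetilde J(P_0) = 0$. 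So $(\nabla V(P_0))^{\!\top}$ annihilates the image of $\widetilde J(P_0)$; this shows $\operatorname{im}\widetilde J(P_0)\subset\ker(\nabla V(P_0))^{\!\top}=T_{P_0}\{V=1\}$, which in particular gives invariance of that plane and shows the third eigenvalue is $0$. Combined with the conjugacy from the previous paragraph, we conclude $\rho=\widetilde\rho$ and $\delta=\widetilde\delta$, with $\widetilde\kappa=0$ as a byproduct. Everything else is routine bookkeeping with the chain rule, so no heavy computation is required.
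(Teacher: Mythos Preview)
Your argument is correct and rests on the same core ingredient as the paper's proof---the first-integral identity $\langle\nabla V,(f,g,h)\rangle\equiv 0$ and its differentiation at the singular point---but the packaging is different. The paper proceeds by direct computation: from $V_{x_1}f+V_{x_2}g+V_{x_3}h\equiv 0$ it writes $h=f\varphi_{x_1}+g\varphi_{x_2}$, differentiates to obtain $h_{x_i}=f_{x_i}\varphi_{x_1}+g_{x_i}\varphi_{x_2}$ at the singular point, and then substitutes this into the explicit formulas $\widetilde\rho=f_{x_1}+g_{x_2}+h_{x_3}$ and $\widetilde\delta=\sum(\text{principal }2\times 2\text{ minors of }\widetilde J)$, matching them term by term with $\rho$ and $\delta$ via the chain rule. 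Your version recasts the same content invariantly: the relation $(\nabla V)^{\top}\widetilde J=0$ at $P_0$ forces $\operatorname{im}\widetilde J(P_0)\subset T_{P_0}\{V=1\}$, yielding a block-triangular form and the conjugacy of $J$ with $\widetilde J|_{T_{P_0}\{V=1\}}$, after which trace and determinant invariance finishes the job. The advantage of your route is conceptual clarity and the byproduct $\widetilde\kappa=0$ at singular points (which the paper establishes separately in the next lemma, in fact at \emph{all} points, by direct calculation); the advantage of the paper's route is that it is fully explicit and requires no linear-algebraic bookkeeping about invariant subspaces.
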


\begin{proof}
Since the volume $V$ is a first integral of the system (\ref{three_equat}), then
$\frac{\partial{V}}{\partial x_1}f+\frac{\partial{V}}{\partial x_2}g+\frac{\partial{V}}{\partial x_3}h \equiv 0$.
On the other hand, $\varphi _{x_1}=-\frac{\partial{V}}{\partial x_1}/\frac{\partial{V}}{\partial x_3}$ and
$\varphi _{x_2}=-\frac{\partial{V}}{\partial x_2}/\frac{\partial{V}}{\partial x_3}$, hence
$h=f\varphi _{x_1}+g\varphi _{x_2}$. Therefore, for any $i=1,2,3$ we have
$
\frac{\partial h}{\partial x_i}=\frac{\partial f}{\partial x_i}\varphi _{x_1}+
\frac{\partial g}{\partial x_i}\varphi _{x_2} +
f\varphi _{x_1x_i}+g\varphi _{x_2x_i}.
$
At a singular point ($\widetilde{f}=f=\widetilde{g}=g=0$) we have
\begin{equation*}\label{simpltdif}
\frac{\partial h}{\partial x_i}=\frac{\partial f}{\partial x_i}\varphi _{x_1}+
\frac{\partial g}{\partial x_i}\varphi _{x_2}.
\end{equation*}
Hence,
$$
\widetilde{\rho}=\frac{\partial f}{\partial x_1}+\frac{\partial g}{\partial x_2}+\frac{\partial h}{\partial x_3}=
\frac{\partial f}{\partial x_1}+\frac{\partial g}{\partial x_2}+\frac{\partial f}{\partial x_3}\varphi _{x_1}+
\frac{\partial g}{\partial x_3}\varphi _{x_2}=\rho
$$
at any singular point. By the same manner and by using (\ref{simpltdif}) we get

\begin{eqnarray*}
\widetilde{\delta}&=&
\frac{\partial f}{\partial x_1}\frac{\partial g}{\partial x_2}-
\frac{\partial f}{\partial x_2}\frac{\partial g}{\partial x_1}+
\frac{\partial f}{\partial x_1}\frac{\partial h}{\partial x_3}-
\frac{\partial f}{\partial x_3}\frac{\partial h}{\partial x_1}+
\frac{\partial g}{\partial x_2}\frac{\partial h}{\partial x_3}-
\frac{\partial g}{\partial x_3}\frac{\partial h}{\partial x_2}\\
&=&\frac{\partial f}{\partial x_1}\frac{\partial g}{\partial x_2}-
\frac{\partial f}{\partial x_2}\frac{\partial g}{\partial x_1}+
\varphi _{x_1}
\left(
\frac{\partial g}{\partial x_2}\frac{\partial f}{\partial x_3}-
\frac{\partial g}{\partial x_3}\frac{\partial f}{\partial x_2}
\right)+
\varphi _{x_2}
\left(
\frac{\partial f}{\partial x_1}\frac{\partial g}{\partial x_3}-
\frac{\partial g}{\partial x_1}\frac{\partial f}{\partial x_3}
\right)=\delta
\end{eqnarray*}
at any singular point.
\end{proof}

\begin{center}
\begin{figure}[t]
\centering\scalebox{1}[1]{\includegraphics[angle=-90,totalheight=4in]
{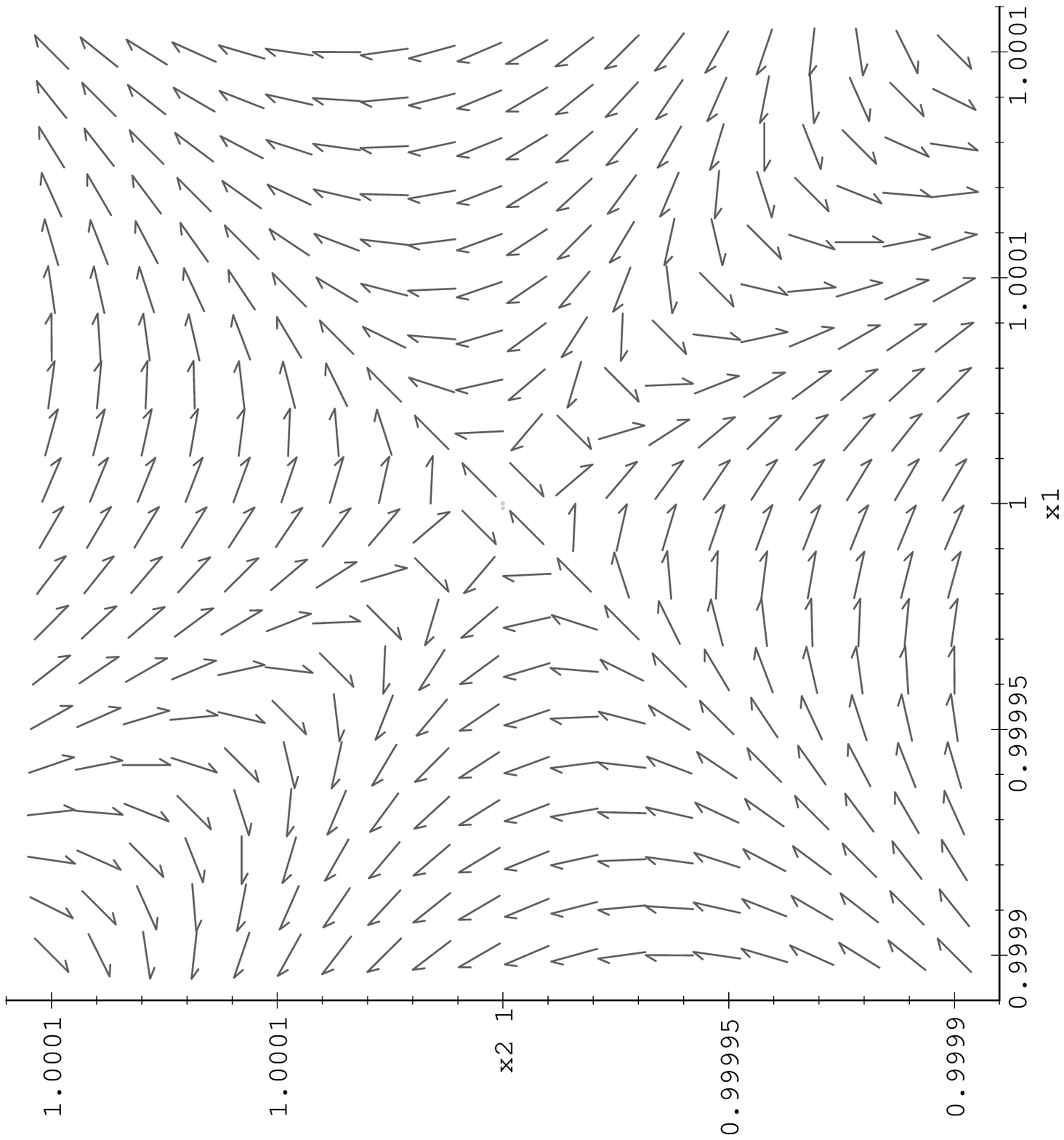}}
\caption{}
\label{degsing}
\end{figure}
\end{center}

\section{A special case}\label{onespc}

Here we consider the case $a_1=a_2=a_3=1/4$ which has a special interest.
\begin{theorem} \label{blow_up}
At $a_1=a_2=a_3=1/4$ the system of ODE's (\ref{two_equat}) has an unique (isolated) singular point
$(x_1^0,x_2^0)=(1,1)$ which is a saddle with six hyperbolic sectors.
\end{theorem}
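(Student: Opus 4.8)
The plan is to analyze the singular point $(1,1)$ of the two–dimensional system (\ref{two_equat}) in the degenerate case $a_1=a_2=a_3=1/4$, where the linearization carries no information, and then to resolve the singularity by a blow–up. First I would verify that $(1,1)$ is the unique positive singular point: when $a_1=a_2=a_3=1/4$ we are in the case $a_1=a_2=b$, $a_3=c$ with $b=c=1/4$, so by the analysis of equations (\ref{solcase1.1})--(\ref{solcase1.3}) the discriminant $D_1=1-4(1-2c)(b+c)=0$ and the value $b=1/4$ forces (\ref{solcase1.3}) to degenerate, so the only solution up to scaling is $(1,1,1)$; on the volume surface $V\equiv 1$ this is exactly $(x_1^0,x_2^0)=(1,1)$, and it is isolated. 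Next I would compute the Jacobian, most conveniently via Lemma \ref{delta_rhon}: one forms the $3\times3$ matrix $\widetilde J$ of the map $(f,g,h)$ at $(1,1,1)$ with $a_i=1/4$ and reads off $\rho=\widetilde\rho$ and $\delta=\widetilde\delta$. I expect to find $\rho=0$ and $\delta=0$ (indeed $\widetilde J$ should be the zero matrix, or at least nilpotent), so that $(1,1)$ is a \emph{nilpotent} or even \emph{linearly zero} degenerate singular point, placing us outside the scope of Theorem \ref{nondegenerate} and squarely in the blow–up regime treated in \cite{Dumort} and \cite{JiangLlible}.

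The core of the argument is then a \emph{blow–up} (or a sequence of blow–ups) of the vector field at $(1,1)$. Shifting coordinates to $u=x_1-1$, $v=x_2-1$, I would expand $\widetilde f$ and $\widetilde g$ in Taylor series and extract the lowest–order homogeneous part, say of degree $m\ge 2$. Introducing polar–type or directional blow–up coordinates ($x_1=r\cos\theta$, $x_2=r\sin\theta$ after translation, or the two charts $v=ux$, $u=vy$), one divides out the common factor $r^{m-1}$ and studies the singular points of the resulting vector field on the exceptional circle. These correspond to the real roots of the ``characteristic'' polynomial obtained by setting the angular component of the leading homogeneous field to zero; each such root gives a singular point on the blow–up circle whose type (hyperbolic node/saddle, or again degenerate) one determines from the now–available linearization. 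By the standard desingularization theory (see \cite{Dumort}, Chapter on nilpotent and degenerate points), gluing the phase portraits in the charts and blowing down yields the local phase portrait at $(1,1)$ as a finite union of parabolic, hyperbolic, and elliptic sectors; here I expect to obtain exactly six hyperbolic sectors and no elliptic ones, which is precisely the assertion that $(1,1)$ is a ``saddle with six hyperbolic sectors.''

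Concretely, I anticipate that after translation the leading part of the system is \emph{quadratic} and that the blow–up produces a handful of singular points on the exceptional divisor, of which the hyperbolic ones (saddles and nodes) arrange, upon blowing down, into six hyperbolic sectors separated by six separatrices; one must check that the non–elementary points appearing on the circle (if any) are resolved by a second blow–up and contribute only more hyperbolic/parabolic sectors, never an elliptic one. The index of the singular point, computed by the Poincar\'e index formula $i = 1 + (e-h)/2$ with $e$ the number of elliptic and $h$ the number of hyperbolic sectors, should then come out to $i=1-3=-2$, consistent with a ``generalized saddle''; this index computation serves as a useful consistency check on the sector count. The main obstacle I foresee is purely computational bookkeeping in the blow–up: correctly identifying the leading homogeneous part of $\widetilde f,\widetilde g$ (which requires carefully differentiating through $\varphi(x_1,x_2)=x_1^{-a_3/a_1}x_2^{-a_3/a_2}=x_1^{-1}x_2^{-1}$), finding all real directions on the exceptional circle, handling any remaining non–elementary points with an iterated blow–up, and then assembling the sector diagram without double–counting separatrices. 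Once the leading part is pinned down, the rest is an application of the desingularization algorithm, so the decisive step is the honest Taylor expansion at $(1,1)$ and the verification that the resulting degenerate point desingularizes into exactly six hyperbolic sectors.
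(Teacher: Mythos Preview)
Your plan is correct and follows essentially the same route as the paper: verify $(1,1)$ is the unique singular point with $J=0$ (linearly zero, not merely nilpotent), extract the quadratic leading part after translation, and resolve by a single directional blow--up. In the paper the blow--up $y=ux$, $d\tau=x\,dt$ yields $\Delta(u)=-(u+2)(u+\tfrac12)(u-1)$ with three simple real roots, each giving a hyperbolic saddle on the exceptional divisor (eigenvalues $\beta_i,-3\beta_i$), so no iterated blow--up is needed and the six hyperbolic sectors follow directly from the classification in \cite{JiangLlible}.
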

\smallskip
\begin{proof}
As the calculations show the unique singular point of (\ref{two_equat}) is indeed $(x_1^0, x_2^0)=(1,1)$
(see Figure \ref{degsing} for a phase portrait in a neighborhood of this point).
Note that in this case $J(x_1^0,x_2^0)=\left (
\begin{array}{cc}
0&0 \\
0&0 \\
\end{array}
\right).
$
By  moving $(1,1)$ to the origin and by the analyticity of $\tilde f$ and $\tilde g$ at $(1,1)$,   system
(\ref{two_equat}) can be reduced to the equivalent system
\begin{eqnarray*}
\dfrac{dx}{dt}&=&P_2(x,y)+P_3(x,y)+P_4(x,y)+\ldots, \\
\dfrac{dy}{dt}&=&Q_2(x,y)+Q_3(x,y)+Q_4(x,y)+\ldots,
\end{eqnarray*}
where
$$P_2(x,y)=-x^2/2+xy+y^2,  \quad Q_2(x,y)=x^2+xy-y^2/2,$$
and $P_i(x,y), Q_i(x,y)$ ($i \geq 3$)  are some homogeneous polynomials of degree $i$ with respect to $x$ and $y$
($x=x_1-1$, $y=x_2-1$).

Next we use  results from \cite{JiangLlible}. Using the blowing-up $y=ux$, $d\tau=xdt$ we obtain the system
\begin{eqnarray*}
\dfrac{dx}{d\tau}&=&xP_2(1,u)+x^2P_3(1,u)+x^3P_4(1,u)+\ldots,\\
\dfrac{du}{d\tau}&=&\Delta(u)+x \big(Q_3(1,u)-uP_3(1,u) \big)+x^2\big(Q_4(1,u)-uP_4(1,u)\big)+\ldots,
\end{eqnarray*}
where
$$
P_2(1,u)=u^2+u-1/2, \quad \Delta(u)=-(u-u_1) (u-u_2)(u-u_3),
$$
$$
u_1=-2, \quad u_2=-1/2, \quad u_3=1.
$$
We have the case of Subsection 6.2 of \cite{JiangLlible} where the equation $\Delta(u)=0$ has three different real roots.
So it is obvious that the  blowing-up system has the singular points $(0,u_1)$, $(0,u_2)$ and $(0,u_3)$.
We show that all of these singular points are saddles.

Let $\beta_i:=P_2(1,u_i)$ and let $J_i$ be the matrix of the linear part of the blowing-up system at the point
$(0,u_i)$, $i=1,2,3$.
Then
$$
J_i=
\left(
\begin{array}{cc}
\beta_i&0 \\
\dfrac{(u_i-1)(u_i+1)(2u_i^2-u_i+2)}{2}&-3\beta_i \\
\end{array}
\right)
$$
with eigenvalues equal to $\beta_i$ and $-3\beta_i$.
It is clear that $\beta_i \ne 0$ for all $i=1,2,3$. Therefore the eigenvalues of $J_i$ have different signs,
and by Theorem  \ref{nondegenerate} all the singular points $(0,u_i)$, $i=1,2,3$, are saddles. The phase
portrait of the blowing-up system is identical to one shown in Figure~7(b) \cite{JiangLlible}.

The saddles $(0,u_i)$, $i=1,2,3$, correspond to the unique singular point $(0,0)$ of the initial system.
According to the qualitative classification of singular points of degree $2$ given by \cite{JiangLlible} the
point $(0,0)$ is also a saddle with six hyperbolic sectors near it (see Figure~3(12) in \cite{JiangLlible}).
\end{proof}

\section{The ``degeneration'' set $\Omega$}\label{degenerset}

Recall that the systems (\ref{three_equat}) and (\ref{two_equat}) are well defined for all
$(a_1,a_2,a_3)\in\mathbb{R}^3$ with $a_1a_2+a_1a_3+a_2a_3\neq 0$
(with the additional restriction $a_1a_2a_3\neq 0$ for the system (\ref{two_equat})).

The special case considered in Section \ref{onespc} leads us to consider the set
$$
\Omega =\{(a_1,a_2,a_3)\in\mathbb{R}^3\,|\,  \mbox{system (\ref{two_equat}) has at least
one degenerate singular point}\}.
$$

We will show a convenient way to deal with degenerate singular points of the system (\ref{two_equat})
in Lemma \ref{sing.one} below.

A direct calculation using Lemma \ref{delta_rhon} gives the following:

\begin{lem} \label{sing.zero}
Let $p(t)=t^3-\widetilde{\rho}t^2+\widetilde{\delta}t+\widetilde{\kappa}$ be the characteristic polynomial
of the Jacobi matrix of the system (\ref{three_equat}) at
any point $(x_1,x_2,x_3)$ with $x_1x_2x_3\neq 0$.
Then  $\widetilde{\kappa}=0$,
$\widetilde{\rho}=\frac{2F_1}{\mathcal{A}x_1x_2x_3}$ and $\widetilde{\delta}=\frac{F_2}{\mathcal{A}^2x_1^2x_2^2x_3^2}$,
where
\begin{eqnarray}\label{singval0}\notag
F_1\,\,\,\,=\,\,\,\,a_1a_2x_1x_2+a_1a_3x_1x_3+a_2a_3x_2x_3\\
-(\mathcal{A}+a_2a_3)a_1x_1^2-(\mathcal{A}+a_1a_3)a_2x_2^2-(\mathcal{A}+a_1a_2)a_3x_3^2,
\end{eqnarray}
\begin{eqnarray}\label{singval1}\notag
F_2\,\,\,\,=\,\,\,\Bigl(a_1^2a_2a_3(2\mathcal{A}+a_2a_3)-\mathcal{A}^3\Bigr)x_1^4+
\Bigl(a_1a_2^2a_3(2\mathcal{A}+a_1a_3)-\mathcal{A}^3\Bigr)x_2^4\\ \notag
+\Bigl(a_1a_2a_3^2(2\mathcal{A}+a_1a_2)-\mathcal{A}^3\Bigr)x_3^4-2a_1^2(\mathcal{A}+a_2a_3)(a_2x_2+a_3x_3)x_1^3\\ \notag
-2a_2^2(\mathcal{A}+a_1a_3)(a_1x_1+a_3x_3)x_2^3-2a_3^2(\mathcal{A}+a_1a_2)(a_2x_2+a_1x_1)x_3^3\\
+\Bigl(a_1a_2(2(3a_1a_2+a_2^2+a_1^2)a_3^2+2(a_1+a_2)a_1a_2a_3+a_1a_2)+2\mathcal{A}^3\Bigr)x_1^2x_2^2\\ \notag
+\Bigl(a_1a_3(2(3a_1a_3+a_3^2+a_1^2)a_2^2+2(a_1+a_3)a_1a_2a_3+a_1a_3)+2\mathcal{A}^3\Bigr)x_1^2x_3^2\\ \notag
+\Bigl(a_2a_3(2(3a_2a_3+a_3^2+a_2^2)a_1^2+2(a_2+a_3)a_1a_2a_3+a_2a_3)+2\mathcal{A}^3\Bigr)x_2^2x_3^2\\ \notag
-2a_1a_2a_3\Bigl((\mathcal{A}+a_2a_3-a_1)x_1+(\mathcal{A}+a_1a_3-a_2)x_2+
(\mathcal{A}+a_1a_2-a_3)x_3\Bigr)x_1x_2x_3, \notag
\end{eqnarray}
and $\mathcal{A}=a_1a_2+a_1a_3+a_2a_3$.
\end{lem}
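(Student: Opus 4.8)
The plan is to compute the characteristic polynomial of the $3\times 3$ Jacobi matrix $\widetilde{J}$ of the system (\ref{three_equat}) directly, and then read off $\widetilde{\rho}=\trace(\widetilde{J})$, $\widetilde{\delta}$ (the sum of principal $2\times 2$ minors of $\widetilde{J}$), and $\widetilde{\kappa}=-\det(\widetilde{J})$. The vanishing $\widetilde{\kappa}=0$ should come essentially for free: since the volume $V=x_1^{1/a_1}x_2^{1/a_2}x_3^{1/a_3}$ is a first integral, $\nabla V$ is a left null vector of $\widetilde{J}$ at every point (differentiating the identity $\nabla V\cdot(f,g,h)\equiv 0$ gives $\nabla V\cdot \widetilde{J}=0$ because the Hessian term $(f,g,h)^{T}\mathrm{Hess}(V)$ is irrelevant on the relation level — more precisely, $\partial_j(\partial_i V\, f^i)=\partial_j\partial_i V\, f^i+\partial_i V\,\partial_j f^i$, and one needs the first term to vanish too; the cleanest route is simply to observe that along $V\equiv c$ the field is tangent, so $\widetilde{J}$ has $\nabla V$ in its left kernel generically, forcing $\det\widetilde{J}\equiv 0$ as a rational identity since it holds on each level set). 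So $\widetilde{\kappa}=0$ identically in $(x_1,x_2,x_3)$ with $x_1x_2x_3\neq 0$.

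For the remaining two coefficients, first I would write $f,g,h$ with common denominator $x_1x_2x_3$, so that $f=N_f/(x_1x_2x_3)$ etc., where $N_f,N_g,N_h$ are polynomials (after clearing also the $B$ term, whose denominator is $\mathcal{A}=a_1a_2+a_1a_3+a_2a_3$ up to the factor $x_1x_2x_3$; this explains the $\mathcal{A}$ and $x_1x_2x_3$ normalizations appearing in the statement). Then each $\partial f/\partial x_j$ is a rational function with denominator $\mathcal{A}x_1^2x_2^2x_3^2$ and numerator a polynomial, and $\widetilde{\rho}=\sum \partial f/\partial x_i$ will have denominator $\mathcal{A}x_1x_2x_3$ after the obvious cancellation, matching the claimed $\widetilde{\rho}=2F_1/(\mathcal{A}x_1x_2x_3)$. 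Computing $F_1$ is then a finite symmetric computation: differentiate, sum the diagonal entries, clear denominators, and simplify, using the $\mathbb{Z}/3$ cyclic symmetry among the indices to reduce the bookkeeping (one computes $\partial f/\partial x_1$ fully and obtains the other diagonal entries by permutation). Similarly $\widetilde{\delta}=\sum_{i<j}\bigl(\partial_i f^i\,\partial_j f^j-\partial_j f^i\,\partial_i f^j\bigr)$ where $f^1=f,f^2=g,f^3=h$; clearing denominators gives the numerator $F_2$ over $\mathcal{A}^2x_1^2x_2^2x_3^2$.

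The real work — and the main obstacle — is the algebraic simplification producing the explicit polynomials $F_1$ and especially $F_2$ in the stated closed form. $F_2$ is a quartic in each $x_i$ with many terms, and collapsing the raw expanded product of two second-order numerators (each itself a polynomial of degree $\sim 4$) into the compact shape with coefficients like $a_1^2a_2a_3(2\mathcal{A}+a_2a_3)-\mathcal{A}^3$ requires systematically recognizing combinations of the $a_i$ that assemble into powers of $\mathcal{A}=a_1a_2+a_1a_3+a_2a_3$. I would organize this by grouping monomials of $F_2$ by their $x$-degree type ($x_i^4$, $x_i^3x_j$, $x_i^2x_j^2$, $x_i^2x_jx_k$), handling one representative of each cyclic class, and checking the $\mathcal{A}^3$-completions term by term; a computer algebra verification is the honest way to certify the final identity. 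Once $F_1,F_2$ are in hand, the statement follows, because "a direct calculation using Lemma \ref{delta_rhon}" is exactly the reduction from the $3\times 3$ data to $\rho,\delta$ of the planar system (\ref{two_equat}) guaranteed by that lemma, evaluated at the lifted point $x_3^0=\varphi(x_1^0,x_2^0)$; but note the formulas for $\widetilde{\rho},\widetilde{\delta}$ here are stated for \emph{any} point with $x_1x_2x_3\neq 0$, so Lemma \ref{delta_rhon} is only needed afterwards to transfer these to $\rho,\delta$ at genuine singular points.
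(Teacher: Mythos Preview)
Your plan is essentially the paper's: the paper offers no argument beyond ``a direct calculation'', and what you describe \emph{is} that calculation --- compute the $3\times 3$ Jacobian, take the trace for $\widetilde{\rho}$, the sum of principal $2\times 2$ minors for $\widetilde{\delta}$, and simplify using the cyclic symmetry (with a computer algebra check for $F_2$). Your remark that Lemma~\ref{delta_rhon} is logically downstream, not an input to this lemma, is also correct.

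There is one genuine gap. Your argument for $\widetilde{\kappa}=0$ via the first integral does not work at an arbitrary point: differentiating $\nabla V\cdot(f,g,h)\equiv 0$ gives $\nabla V\cdot\widetilde{J}=-(f,g,h)\,\mathrm{Hess}(V)$, which vanishes only where $f=g=h=0$. The tangency of the flow to the level sets of $V$ does not force $\nabla V$ into the left kernel of $\widetilde{J}$ away from singular points, so ``generically'' and the rational-identity continuation are not justified. The clean fix is to note instead that $f,g,h$ are homogeneous of degree~$0$ in $(x_1,x_2,x_3)$ (each of $-1$, the $a_ix_i(\cdots)$ term, and $x_iB$ is invariant under $x\mapsto\lambda x$). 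Euler's relation then gives $\sum_j x_j\,\partial_j f^i=0$ for $i=1,2,3$, so $(x_1,x_2,x_3)^T$ is a right null vector of $\widetilde{J}$ at every point with $x_1x_2x_3\neq 0$, and hence $\det\widetilde{J}=-\widetilde{\kappa}=0$ identically. Alternatively, you can simply let the computer verify $\det\widetilde{J}\equiv 0$ as part of the same direct calculation.
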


\begin{lem} \label{sing.one}
A singular point $(x_1,x_2)$ of the system (\ref{two_equat}) is degenerate if and only
if the point $(x_1,x_2, x_3=\varphi(x_1,x_2))$ satisfies the equation $F_2=0$, where
$F_2$ is given by (\ref{singval1}).
\end{lem}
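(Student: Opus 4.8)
The plan is to unwind the definitions. By Lemma \ref{delta_rhon}, if $(x_1,x_2)$ is a singular point of the system \eqref{two_equat}, then the quantities $\rho$ and $\delta$ associated to the Jacobian \eqref{linear} of the reduced planar system coincide with $\widetilde\rho$ and $\widetilde\delta$, the first two nontrivial coefficients of the characteristic polynomial $p(t)=t^3-\widetilde\rho t^2+\widetilde\delta t+\widetilde\kappa$ of the $3\times 3$ Jacobian $\widetilde J$ of the system \eqref{three_equat} at the point $(x_1,x_2,x_3)$ with $x_3=\varphi(x_1,x_2)$. By definition, the singular point $(x_1,x_2)$ of \eqref{two_equat} is degenerate precisely when $\delta=\det J=0$, i.e.\ when $\widetilde\delta=0$. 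Now Lemma \ref{sing.zero} gives the explicit formula $\widetilde\delta=\dfrac{F_2}{\mathcal A^2 x_1^2x_2^2x_3^2}$, where $F_2$ is the polynomial \eqref{singval1} and $\mathcal A=a_1a_2+a_1a_3+a_2a_3$. Since we work under the standing assumptions $a_1a_2a_3\neq 0$ (so that \eqref{two_equat} is defined) and $x_1x_2x_3\neq 0$ (all components positive), the denominator $\mathcal A^2x_1^2x_2^2x_3^2$ is nonzero: indeed $a_1a_2a_3\neq 0$ together with $a_i>0$ forces $\mathcal A>0$. Hence $\widetilde\delta=0$ if and only if $F_2=0$ at $(x_1,x_2,x_3)$.

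Therefore the chain of equivalences is: $(x_1,x_2)$ is a degenerate singular point of \eqref{two_equat} $\iff$ $\delta=0$ $\iff$ $\widetilde\delta=0$ (by Lemma \ref{delta_rhon}) $\iff$ $F_2=0$ at $(x_1,x_2,\varphi(x_1,x_2))$ (by Lemma \ref{sing.zero} and nonvanishing of the denominator). This is exactly the assertion of the lemma.

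There is essentially no obstacle here: the statement is a direct corollary of the two preceding lemmas, and the only point requiring a word of justification is that one is allowed to clear the denominator in the formula for $\widetilde\delta$, which is immediate from the hypotheses $a_i>0$ (hence $\mathcal A>0$) and $x_i>0$. The proof is short enough that one may simply write out these three equivalences and remark on the nonvanishing of $\mathcal A^2x_1^2x_2^2x_3^2$.
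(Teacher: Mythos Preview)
Your proof is correct and follows essentially the same route as the paper: invoke Lemma~\ref{delta_rhon} to identify $\delta$ with $\widetilde\delta$, then use Lemma~\ref{sing.zero} and the nonvanishing of the denominator to conclude $\widetilde\delta=0\iff F_2=0$. One small remark: you justify $\mathcal A\neq 0$ via $a_i>0$, but in fact $\mathcal A=a_1a_2+a_1a_3+a_2a_3\neq 0$ is already a standing hypothesis for the system~\eqref{two_equat}, so the argument works without assuming positivity of the $a_i$.
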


\begin{proof} By Lemma \ref{delta_rhon}  a singular point $(x_1,x_2)$  of the system (\ref{two_equat}) is degenerate
if and only if $\delta=\widetilde{\delta}=0$.
Note that the right hand side of  equation (\ref{singval1}) is homogeneous in the variables $x_1,x_2,x_3$.
Therefore, without loss of generality we may consider these variables up to a positive multiple.
Obviously from  Lemma \ref{sing.zero} it follows that $\widetilde{\delta}=0$ is equivalent to $F_2=0$.
\end{proof}
\bigskip

Now, we can represent the set $\Omega$ as an algebraic surface in $\mathbb{R}^3$.

\begin{lem} \label{sing.two}
A point $(a_1,a_2,a_3)$ with $a_1a_2+a_1a_3+a_2a_3\neq 0$ and $a_1a_2a_3\neq 0$
lies in the set $\Omega$ if and only if
$Q(a_1,a_2,a_3)=0$, where
\begin{eqnarray}\label{singval2}\notag
Q(a_1,a_2,a_3)\,=\,
(2s_1+4s_3-1)(64s_1^5-64s_1^4+8s_1^3+12s_1^2-6s_1+1\\\notag
+240s_3s_1^2-240s_3s_1-1536s_3^2s_1-4096s_3^3+60s_3+768s_3^2)\\
-8s_1(2s_1+4s_3-1)(2s_1-32s_3-1)(10s_1+32s_3-5)s_2\\\notag
-16s_1^2(13-52s_1+640s_3s_1+1024s_3^2-320s_3+52s_1^2)s_2^2\\\notag
+64(2s_1-1)(2s_1-32s_3-1)s_2^3+2048s_1(2s_1-1)s_2^4,
\end{eqnarray}
and
$$
s_1 = a_1+a_2+a_3, \quad s_2 = a_1a_2+a_1a_3+a_2a_3, \quad s_3 = a_1a_2a_3.
$$
\end{lem}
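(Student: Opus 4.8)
The plan is to combine Lemma~\ref{sing.one} with an elimination argument. By Lemma~\ref{sing.one}, a point $(a_1,a_2,a_3)$ (with $a_1a_2+a_1a_3+a_2a_3\neq 0$ and $a_1a_2a_3\neq 0$) lies in $\Omega$ precisely when the system consisting of the two singular-point equations~(\ref{two_equat_sing1}) together with the degeneracy condition $F_2=0$ from~(\ref{singval1}) has a common solution $(x_1,x_2,x_3)$ with positive components. Since all three of these polynomials are homogeneous in $(x_1,x_2,x_3)$ (of degrees $2$, $2$, and $4$ respectively), the existence of a nonzero solution is a codimension-one condition on the parameters, and the strategy is to eliminate $x_1,x_2,x_3$ from this overdetermined homogeneous system to obtain a single polynomial condition on $a_1,a_2,a_3$, which will turn out to be (a constant multiple of) $Q(a_1,a_2,a_3)=0$.

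Concretely, first I would dehomogenize by setting $x_3=1$ (legitimate since we only care about solutions with $x_3>0$, up to positive scaling) and work with the two equations of~(\ref{two_equat_sing1}) in the two unknowns $x_1,x_2$ together with $F_2(x_1,x_2,1)=0$. From the first equation of~(\ref{two_equat_sing1}) one can solve for $x_1$ (or treat it as a quadratic), and substituting into the second yields a quartic relation — this is exactly the reduction already carried out in Section~\ref{sinpsec} leading to~(\ref{solcase3.1}) in the generic case, and one can reuse the parametrizations~(\ref{solcase1.2}), (\ref{solcase2}), (\ref{solcase3.1}) according to whether two of the $a_i$ coincide, $a_1+a_2+a_3=1/2$, or neither. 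Then $(a_1,a_2,a_3)\in\Omega$ iff at least one of these explicitly parametrized singular points also satisfies $F_2=0$. Substituting each family into $F_2$ and simplifying should produce a polynomial identity; collecting the cases, the resultant/discriminant computation yields a single polynomial in $a_1,a_2,a_3$. Because $F_2$ and the equations~(\ref{two_equat_sing1}) are symmetric under permutations of the indices $1,2,3$ (the defining data of a generalized Wallach space is symmetric), the eliminant must be a symmetric polynomial, hence expressible in the elementary symmetric functions $s_1,s_2,s_3$; this is why the answer comes out in the stated form.

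The main obstacle is the elimination itself: naively taking resultants of a quadratic and a quartic in two variables produces a polynomial of high degree with enormous coefficients, and one must verify that after removing spurious factors — factors corresponding to solutions with a vanishing coordinate, or to the excluded locus $a_1a_2+a_1a_3+a_2a_3=0$, or to $a_1a_2a_3=0$ (note the denominators $\mathcal{A}^2x_1^2x_2^2x_3^2$ in Lemma~\ref{sing.zero}) — what survives is exactly $Q$. In particular one should check that the factor $2s_1+4s_3-1$ appearing in~(\ref{singval2}) is genuine and not an extraneous artifact, and likewise account for the coincidence-of-roots subcase (the $SO(20)/(SO(5)\times SO(6)\times SO(9))$ example with $D_3=0$), verifying it lies on $Q=0$. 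A practical route is to perform the elimination with a computer algebra system over $\mathbb{Q}[a_1,a_2,a_3]$, factor the output, identify and discard the extraneous factors by direct inspection on test points, and then rewrite the remaining irreducible factor in terms of $s_1,s_2,s_3$; the verification that this rewriting matches~(\ref{singval2}) is a finite (if tedious) symbolic check. One should also confirm the converse direction — that every $(a_1,a_2,a_3)$ with $Q=0$ (and $s_2\neq 0$, $s_3\neq 0$) actually produces a \emph{positive} degenerate singular point — which follows from the positivity statements about the roots of~(\ref{solcase1.3}), (\ref{solcase3.1}) etc. recalled in Section~\ref{sinpsec}.
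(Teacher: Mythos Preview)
Your approach is essentially the same as the paper's: invoke Lemma~\ref{sing.one}, dehomogenize by $x_3=1$, and eliminate $x_1,x_2$ from the three equations (\ref{two_equat_sing1}) and $F_2=0$ using a computer algebra system, then discard spurious factors. The paper does exactly this, obtaining
\[
(4a_1^2-1)(4a_2^2-1)(a_1+a_3)(a_2+a_3)(a_1a_2+a_1a_3+a_2a_3)^2\cdot Q(a_1,a_2,a_3)=0,
\]
and then argues (briefly) that the first five factors are extraneous, partly by inspection and partly by the permutation symmetry $a_1\to a_2\to a_3\to a_1$.

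Two minor remarks. First, the detour you sketch through the explicit families (\ref{solcase1.2}), (\ref{solcase2}), (\ref{solcase3.1}) is unnecessary: the paper bypasses Section~\ref{sinpsec} entirely here and eliminates directly from (\ref{two_equat_sing1}) and $F_2=0$. Second, your worry about $2s_1+4s_3-1$ being an extraneous factor is misplaced: inspect (\ref{singval2}) and you will see that only the $s_2^0$ and $s_2^1$ terms carry this factor, while the $s_2^2,s_2^3,s_2^4$ terms do not, so $2s_1+4s_3-1$ does not divide $Q$. The actual spurious factors are the ones displayed above. Your caution about the converse direction (that $Q=0$ really yields a degenerate singular point with positive coordinates) is, if anything, more careful than the paper, which leaves this implicit.
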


\begin{proof}
Under the assumptions $a_1a_2+a_1a_3+a_2a_3\neq 0$, $a_1a_2a_3\neq 0$, and $x_3=\varphi(x_1,x_2)$,
the singular points $(x_1,x_2)$ of the
system (\ref{two_equat}) can be found from the equations (\ref{two_equat_sing1}),
and according to Lemma~\ref{sing.one} they are degenerate if and only if $F_2=0$.
Note that the equations (\ref{two_equat_sing1}) and $F_2=0$
are homogeneous with respect to $x_1,x_2,x_3$. Setting $x_3=1$ and eliminating $x_1$ and $x_2$
from these three equations (e.~g. using {\sl Maple} or {\sl Mathemathica}) we obtain the equation
$$
(4a_1^2-1)(4a_2^2-1)(a_1+a_3)(a_2+a_3)(a_1a_2+a_1a_3+a_2a_3)^2\cdot Q(a_1,a_2,a_3)=0.
$$
Note that for $a_1=\pm 1/2$, $a_2=\pm 1/2$, $a_3=-a_1$, and $a_3=-a_2$ we have no additional
triples  $(a_1,a_2,a_3)$ of ``degenerate'' parameters.
Therefore, the first four factors in the above equation can be ignored.
Another reason to ignore them is the symmetry of the problem under the permutation $a_1\to a_2 \to a_3\to a_1$.
\end{proof}

It is easy to see that $Q(a_1,a_2,a_3)$ is a symmetric polynomial in $a_1,a_2,a_3$
of degree 12. Therefore, the equation $Q(a_1,a_2,a_3)=0$
(without the restrictions $a_1a_2+a_1a_3+a_2a_3\neq 0$ and $a_1a_2a_3\neq 0$)
defines an algebraic surface in
$\mathbb{R}^3$ that we
may identify with the set $\Omega$.

\begin{center}
\begin{figure}[t]
\centering\scalebox{1}[1]{\includegraphics[angle=-90,totalheight=3in]
{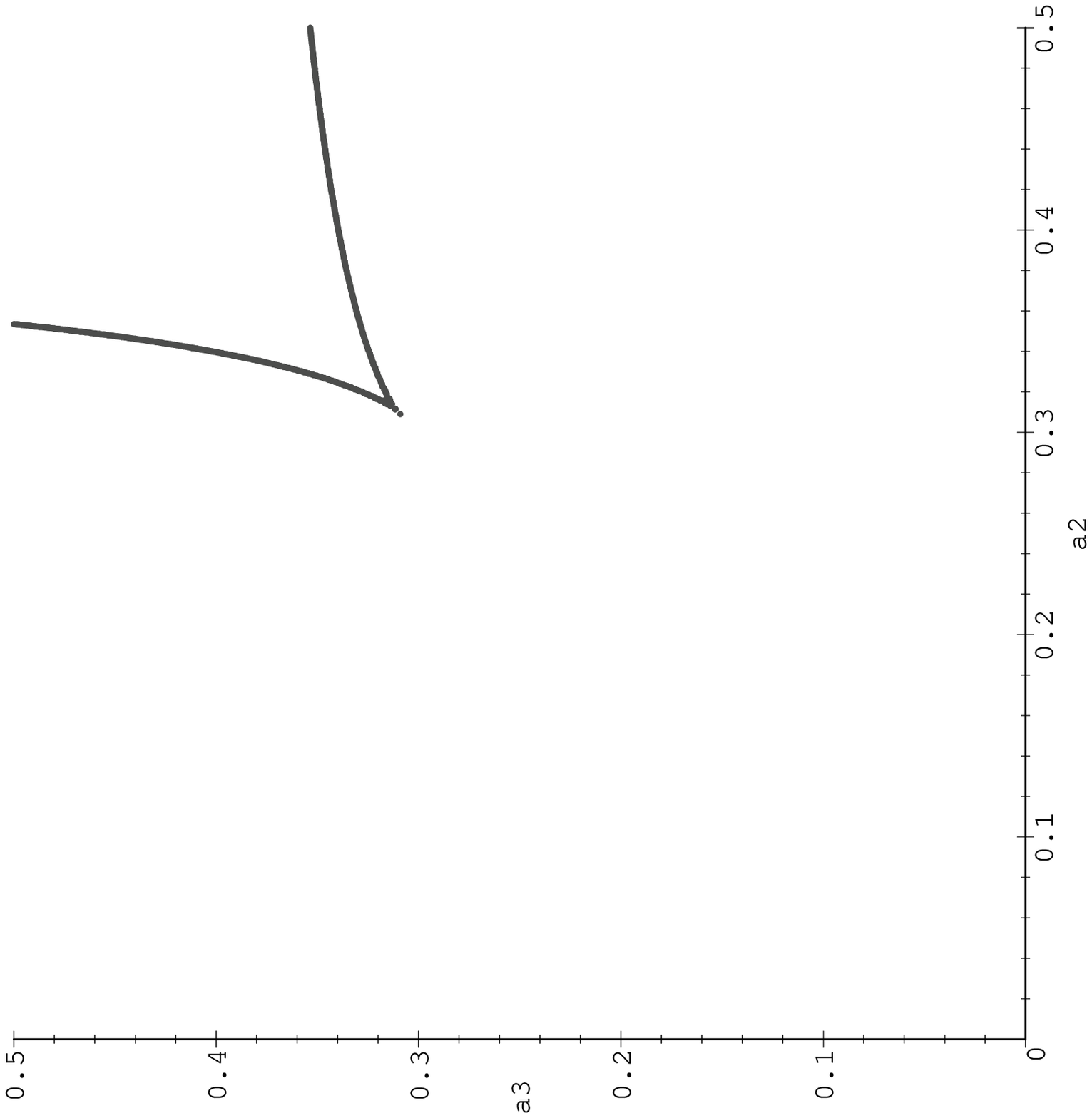}}
\caption{}
\label{pictur1}
\end{figure}
\end{center}

{\it In the rest of this section, we consider only points
$(a_1,a_2,a_3)\in(0,1/2]\times(0,1/2]\times (0,1/2]$.}

It is very important to describe in details the set
$$
\Omega \cap (0,1/2]^3 =\{(a_1,a_2,a_3)\in (0,1/2]\times (0,1/2]\times (0,1/2]\,: \,Q(a_1,a_2,a_3)=0\}.
$$
As usual, the most complicated and interesting problem is
the study of this surface in neighborhoods of singular points of $\Omega$
determined by $\nabla Q(a_1,a_2,a_3)=0$.
\medskip

For $a_1=1/2$ the equation $Q=0$ is equivalent to
$$
4\widetilde{s}_2(4\widetilde{s}_2+1)^2-4(4\widetilde{s}_2-1)(4\widetilde{s}_2+1)^2\widetilde{s}_1-
13(4\widetilde{s}_2+1)^2\widetilde{s}_1^2+4(4\widetilde{s}_2-1)\widetilde{s}_1^3+44\widetilde{s}_1^4=0,
$$
where $\widetilde{s}_1=a_2+a_3$ and $\widetilde{s}_2=a_2a_3$. If $a_2,a_3 \in (0,1/2]$ then this set is
a curve homeomorphic to the interval $[0,1]$ with endpoints $(1/2,1/2,\sqrt{2}/2)$ and $(1/2,\sqrt{2}/4,1/2)$
and with the singular point (a cusp) at the point $a_3=a_2=(\sqrt{5}-1)/4\approx 0.3090169942$
(see Figure \ref{pictur1}).
The same is also valid under the permutation $a_1\to a_2 \to a_3\to a_1$.

Note that for $s_1=a_1+a_2+a_3=1/2$ the equation $Q=0$ is equivalent to
$s_3^2(s_2-2s_3)^2=\frac{1}{2}a_1a_2a_3(-5a_1a_2+a_1-2a_1^2+a_2-2a_2^2+6a_1a_2(a_1+a_2))=0$.
It is easy to check that for $a_1,a_2 \in [0,1/2]$  the equality $-5a_1a_2+a_1-2a_1^2+a_2-2a_2^2+6a_1a_2(a_1+a_2)=0$
holds only when $(a_1,a_2)$ is one of the  points $(0,0)$, $(0,1/2)$, and $(1/2,0)$.

Therefore, $s_1=a_1+a_2+a_3=1/2$
in the set $\Omega \cap [0,1/2]^3$ only for points in the boundary of the triangle with  vertices
$(0,0,1/2)$, $(0,1/2,0)$, and $(1/2,0,0)$. For all other points in $\Omega \cap (0,1/2]^3$ we have the inequality
$s_1=a_1+a_2+a_3>1/2$.
\smallskip

It is clear that $(1/4,1/4,1/4)\in \Omega$.
Note that for $s_1=a_1+a_2+a_3=3/4$ the equation $Q=0$ is equivalent to
$$
(24s_2^2+8s_2+64s_2s_3-8s_3-128s_3^2+1)(32s_2-64s_3-5)^2=0.
$$
It is not difficult to show that $(1/4,1/4,1/4)$ is the only point in $\Omega \cap [0,1/2]^3$
satisfying the additional condition $s_1=a_1+a_2+a_3=3/4$.

It turns out that the point $(1/4,1/4,1/4)$ is a singular point of degree $3$ of the algebraic surface
$\Omega$ (see Figure \ref{singsur}). The type of this point is {\bf elliptic umbilic} in the sense of Darboux
(see \cite[pp.~448--464]{Dar} and \cite[p.~320]{Thom}) or of type
$D_4^-$ in other terminology (see e.~g. \cite[Chapter III, Sections 21.3, 22.3]{AGV}).
\smallskip

\section{On the signs of $\sigma$ and $\rho$ for singular points of the system (\ref{two_equat})}

In this section we study  the singular
points of the system~(\ref{two_equat}) according to the signs of $\sigma$ and $\rho$ (see (\ref{sirhodel})). Results depend on conditions on the parameters $a_1, a_2, a_3$.

\begin{lem} \label{sigma_minim}
The quadratic form
\begin{eqnarray*}
G(x,y,z):=-2(a_1a_3+a_2a_3)xy-2(a_1a_2+a_1a_3)yz-2(a_2a_3+a_1a_2)xz\\
+(\mathcal{A}+a_1^2)x^2+(\mathcal{A}+a_2^2)y^2+(\mathcal{A}+a_3^2)z^2,
\end{eqnarray*}
where $\mathcal{A}=a_1a_2+a_1a_3+a_2a_3$,
is non-negative if $\mathcal{A}>0$ (in particular, if $a_i>0$ for $i=1,2,3$)
and achieves its absolute minimum (equal to zero) exactly at the points
$$
(x,y,z)=\bigl((a_2+a_3)t, (a_1+a_3)t,(a_1+a_2)t \bigr), \quad t \in \mathbb{R}.
$$
\end{lem}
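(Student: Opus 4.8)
The plan is to diagonalize the situation by a direct change of viewpoint rather than a brute-force Hessian analysis. First I would rewrite $G$ so that the claimed minimizing line $(x,y,z)=((a_2+a_3)t,(a_1+a_3)t,(a_1+a_2)t)$ becomes transparent. The natural guess is that $G$ is a sum of squares whose common zero locus is exactly that line. Concretely, I expect an identity of the shape
$$
G(x,y,z)=\lambda_1\bigl(a_2 y - a_3 z + (\text{stuff})\bigr)^2 + \cdots,
$$
but the cleanest route is to observe that $G$ is essentially the "$F_2$-type" quadratic form appearing in $\widetilde\delta$ evaluated along the Einstein directions, so one should look for it to factor through the linear forms $\ell_1 = a_2 y - a_3 z$, $\ell_2 = a_3 z - a_1 x$, $\ell_3 = a_1 x - a_2 y$ (which satisfy $a_1\ell_1 + a_2\ell_2 + a_3\ell_3$-type relations and vanish simultaneously precisely on the line in question, after checking that $(a_2+a_3,a_1+a_3,a_1+a_2)$ indeed kills all three). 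I would compute $\ell_1,\ell_2,\ell_3$ on the candidate point to confirm the zero locus is as claimed, noting $\ell_1+\ell_2+\ell_3=0$ so only two are independent.

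The core step is then to express $G$ as a positive combination of $\ell_1^2,\ell_2^2,\ell_3^2$ (plus possibly cross terms that are themselves manifestly controlled). I would posit $G = \alpha_1\ell_1^2 + \alpha_2\ell_2^2 + \alpha_3\ell_3^2$ with undetermined coefficients $\alpha_i$ depending symmetrically on $a_1,a_2,a_3$, expand, and match the six coefficients of $x^2,y^2,z^2,xy,yz,xz$. This gives a small linear system for the $\alpha_i$; by the symmetry of $G$ under the cyclic permutation $(a_1,x)\to(a_2,y)\to(a_3,z)$, I expect $\alpha_i$ to be obtained from one formula by permutation, and I anticipate something like $\alpha_i$ proportional to a sum of the $a_ja_k$ that makes each $\alpha_i>0$ whenever $\mathcal{A}>0$. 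Once the identity $G=\sum\alpha_i\ell_i^2$ is verified and each $\alpha_i>0$ is checked, non-negativity is immediate, and $G=0$ forces $\ell_1=\ell_2=\ell_3=0$, i.e. $a_1x=a_2y=a_3z$; solving this together with the (implicit) normalization shows $(x,y,z)$ is proportional to $(1/a_1,1/a_2,1/a_3)$ — and here I would double-check that $(1/a_1,1/a_2,1/a_3)$ is indeed parallel to $(a_2+a_3,a_1+a_3,a_1+a_2)$, which it is \emph{not} in general, so the correct linear forms must instead be chosen so their common kernel is exactly spanned by $(a_2+a_3,a_1+a_3,a_1+a_2)$; the right choice is $\ell_1 = (a_1+a_3)z-(a_1+a_2)y$ and its cyclic images, whose span of kernels is one-dimensional and equals the claimed line.

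With the linear forms fixed as $\ell_1=(a_1+a_3)z-(a_1+a_2)y$, $\ell_2=(a_1+a_2)x-(a_2+a_3)z$, $\ell_3=(a_2+a_3)y-(a_1+a_3)x$ (again $\ell_1+\ell_2+\ell_3=0$ up to a common factor, and all vanish on the claimed line), I would solve for coefficients $\alpha_i$ in $G=\alpha_1\ell_1^2+\alpha_2\ell_2^2+\alpha_3\ell_3^2$; matching coefficients is routine and I will not grind through it here, but the output should be rational functions of the $a_i$ that are positive on $\{\mathcal{A}>0\}$. The main obstacle I foresee is precisely pinning down these $\ell_i$ and $\alpha_i$ so that the sum-of-squares decomposition is exact and the positivity of the $\alpha_i$ is visibly implied by $\mathcal{A}>0$ (as opposed to some stronger hypothesis); if a clean global SOS with positive scalar coefficients does not exist, the fallback is to use the constraint $\ell_1+\ell_2+\ell_3=0$ to reduce to two variables and diagonalize the resulting binary quadratic form, checking that its discriminant has a fixed sign when $\mathcal{A}>0$. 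Finally, since $\operatorname{rank}$ of the system $\ell_1=\ell_2=\ell_3=0$ is $2$, its solution set is exactly the one-dimensional line stated, so $G$ attains its minimum value $0$ there and nowhere else, completing the proof.
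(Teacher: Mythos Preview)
Your approach is genuinely different from the paper's. The paper simply writes down the symmetric matrix of $G$ and states its three eigenvalues are $0$, $2\mathcal{A}$, and $a_1^2+a_2^2+a_3^2+\mathcal{A}$; once this is checked, non-negativity and the description of the zero set follow immediately. There is no sum-of-squares decomposition, no choice of auxiliary linear forms, and no case analysis.

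Your SOS idea with $\ell_1=(a_1+a_3)z-(a_1+a_2)y$, $\ell_2=(a_1+a_2)x-(a_2+a_3)z$, $\ell_3=(a_2+a_3)y-(a_1+a_3)x$ does lead to a clean identity: matching the cross-term coefficients forces
\[
\alpha_1=\frac{a_1(a_2+a_3)}{(a_1+a_2)(a_1+a_3)},\quad
\alpha_2=\frac{a_2(a_1+a_3)}{(a_1+a_2)(a_2+a_3)},\quad
\alpha_3=\frac{a_3(a_1+a_2)}{(a_1+a_3)(a_2+a_3)},
\]
and one verifies $G=\alpha_1\ell_1^2+\alpha_2\ell_2^2+\alpha_3\ell_3^2$. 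This is pleasant and gives the result at once when all $a_i>0$. However, your expectation that the $\alpha_i$ are positive on all of $\{\mathcal{A}>0\}$ is \emph{false}: take $a_1=a_2=3$, $a_3=-1$, so $\mathcal{A}=3>0$, yet $\alpha_3=-3/2<0$. Since the $\ell_i$ are linearly dependent, a negative $\alpha_i$ does not contradict $G\ge 0$, but it does mean this decomposition no longer proves it. So for the full hypothesis $\mathcal{A}>0$ you are forced onto your fallback (reduce to a binary form in $\ell_1,\ell_2$ and check its discriminant), which amounts to the same eigenvalue check the paper performs, only in coordinates adapted to the kernel. Also, a small correction: the dependence among your $\ell_i$ is $(a_2+a_3)\ell_1+(a_1+a_3)\ell_2+(a_1+a_2)\ell_3=0$, not $\ell_1+\ell_2+\ell_3=0$.
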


\begin{proof}
It is easy to show that the matrix of the form $G$ has non negative eigenvalues
$0$, $2\mathcal{A}$, and $a_1^2+a_2^2+a_3^2+\mathcal{A}$. Obviously, the last two numbers are positive for $\mathcal{A}>0$.
Therefore $G$ is a non-negative form. Note that the equation $G=0$
has the solutions given in the statement of the lemma.
\end{proof}

\begin{theorem}\label{nonode}
For $a_1a_2+a_1a_3+a_2a_3>0$ all singular points of the system (\ref{two_equat}) are such that
$\sigma\equiv\rho^2-4\delta\geq 0$. In particular, a non-degenerate singular point (i.e. $\delta\neq 0$) of  (\ref{two_equat}) is either a node (if $\delta >0$) or a saddle (if $\delta <0$).
\end{theorem}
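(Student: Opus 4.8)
The plan is to reduce the claimed inequality $\sigma = \rho^2 - 4\delta \ge 0$ at a singular point to a statement about the quadratic form $G$ of Lemma~\ref{sigma_minim}. By Lemma~\ref{delta_rhon} we may work with the three-variable data: $\rho = \widetilde\rho$ and $\delta = \widetilde\delta$ evaluated at $(x_1^0,x_2^0,x_3^0)$ with $x_3^0 = \varphi(x_1^0,x_2^0)$. By Lemma~\ref{sing.zero} we have the explicit formulas $\widetilde\rho = \frac{2F_1}{\mathcal A x_1x_2x_3}$ and $\widetilde\delta = \frac{F_2}{\mathcal A^2 x_1^2x_2^2x_3^2}$, so that
$$
\sigma = \rho^2 - 4\delta = \frac{4F_1^2 - 4F_2}{\mathcal A^2 x_1^2x_2^2x_3^2}.
$$
Since $\mathcal A > 0$ and the $x_i$ are positive, the sign of $\sigma$ is the sign of $F_1^2 - F_2$. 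So the whole theorem comes down to showing $F_1^2 - F_2 \ge 0$ for every point $(x_1,x_2,x_3)$ lying on the singular-point locus (\ref{two_equat_sing1}) (with positive components, though the expression is homogeneous so the positivity can be relaxed once we see the right identity).

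First I would compute $F_1^2 - F_2$ as a polynomial in $x_1,x_2,x_3$ (degree $4$, homogeneous) using the definitions (\ref{singval0}) and (\ref{singval1}). My expectation — and this is the crux — is that $F_1^2 - F_2$ factors, or at least can be rewritten, as $x_1 x_2 x_3$ times something, or as a product/combination that is manifestly non-negative on the singular locus. The natural candidate, given that Lemma~\ref{sigma_minim} has been inserted precisely here, is that $F_1^2 - F_2$ equals (a positive multiple of) the quadratic form $G(x_1,x_2,x_3)$ times $x_1x_2x_3$, or something of that shape, modulo terms that vanish on the locus (\ref{two_equat_sing1}). More precisely, I would look for an identity of the form
$$
F_1^2 - F_2 = x_1 x_2 x_3 \cdot G(x_1,x_2,x_3) + (\text{linear combination of the two polynomials in (\ref{two_equat_sing1})}),
$$
with polynomial coefficients. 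On the singular locus the last bracket vanishes, leaving $F_1^2 - F_2 = x_1x_2x_3\, G(x_1,x_2,x_3) \ge 0$ by Lemma~\ref{sigma_minim} together with $x_i > 0$. This would give $\sigma \ge 0$ at once.

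The main obstacle is establishing that identity: it is a purely computational claim that $F_1^2 - F_2$ lies in the ideal generated by the two cubics of (\ref{two_equat_sing1}) modulo $x_1x_2x_3\,G$, and finding the explicit polynomial cofactors. This is best handled with a computer algebra system (Maple or Mathematica, as the paper already uses elsewhere) — compute $F_1^2 - F_2 - x_1x_2x_3 G$ and reduce it against the ideal $(P,Q)$ of (\ref{two_equat_sing1}) to check the remainder is zero. One subtlety: the two equations in (\ref{two_equat_sing1}) are only two of the three ``singular point'' equations (the third being the analogous relation obtained by cyclic permutation), and one must be careful that on the common zero set with $x_i>0$ these two already cut out the locus; alternatively one uses all three symmetric relations. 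Once the identity is in hand, the conclusion about non-degenerate points is immediate from Theorem~\ref{nondegenerate}: $\sigma \ge 0$ forces the eigenvalues $\lambda_{1,2} = (\rho \pm \sqrt\sigma)/2$ to be real, and then $\delta = \lambda_1\lambda_2 > 0$ gives a node (case (ii)) while $\delta < 0$ gives a saddle (case (i)), with the focus cases (iii), (iv) excluded.
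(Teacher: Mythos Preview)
Your overall strategy --- pass to $\widetilde\rho,\widetilde\delta$ via Lemma~\ref{delta_rhon}, use the formulas of Lemma~\ref{sing.zero}, and then appeal to Lemma~\ref{sigma_minim} --- matches the paper. But you are missing the key simplification, and your guessed identity cannot be right as stated.

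The paper simply computes $\sigma=\widetilde\rho^{\,2}-4\widetilde\delta$ directly and finds
\[
\sigma=\frac{G(x_1^2,x_2^2,x_3^2)}{x_1^2x_2^2x_3^2},
\]
i.e. $4\bigl(F_1^2-F_2\bigr)=\mathcal A^2\,G(x_1^2,x_2^2,x_3^2)$ as an \emph{identity} in $x_1,x_2,x_3,a_1,a_2,a_3$. Hence $\sigma\ge 0$ at \emph{every} point with $x_i>0$, not only at singular points; the equations (\ref{two_equat_sing1}) play no role whatsoever in establishing $\sigma\ge 0$. Your proposed form $F_1^2-F_2 = x_1x_2x_3\,G(x_1,x_2,x_3)+(\text{ideal terms})$ is off already on degree grounds: $F_1^2-F_2$ is homogeneous of degree $4$ in the $x_i$, while $x_1x_2x_3\,G(x_1,x_2,x_3)$ has degree $5$. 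The correct move is to evaluate the quadratic form $G$ at the \emph{squares} $x_i^2$, which gives a degree-$4$ expression matching $F_1^2-F_2$ exactly (up to the factor $\mathcal A^2/4$). Once you see this, the ideal-membership computation and the worry about whether two of the three singular-point equations suffice both evaporate. The final sentence invoking Theorem~\ref{nondegenerate} is fine.
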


\begin{proof}
Let $\mathcal{A}=a_1a_2+a_1a_3+a_2a_3>0$. By Lemmas \ref{delta_rhon} and \ref{sing.zero} we get
\begin{eqnarray*}
\sigma={\rho}^2-4{\delta}=\widetilde{\rho}^2-4\widetilde{\delta}=\\
\Bigl(-2(a_1a_3+a_2a_3)x_1^2x_2^2-2(a_1a_2+a_1a_3)x_2^2x_3^2-2(a_2a_3+a_1a_2)x_1^2x_3^2+\\
(\mathcal{A}+a_1^2)x_1^4+(\mathcal{A}+a_2^2)x_2^4+(\mathcal{A}+a_3^2)x_3^4\Bigr)
x_1^{-2}x_2^{-2}x_3^{-2}.
\end{eqnarray*}
Using Lemma \ref{sigma_minim} for $x=x_1^2$, $y=x_2^2$ and $z=x_3^2$, we get that $\sigma \geq 0$
for all $x_1,x_2,x_3>0$, $x_3= \varphi(x_1,x_2)$.
So, in particular, $\sigma \geq 0$  holds for all singular points of the system (\ref{two_equat}), and
by Theorem \ref{nondegenerate} its non degenerate singular points can be only
either a node or a saddle.
\end{proof}

\begin {remark}\label{sigmaziro}
From Theorem \ref{nonode} and Lemma \ref{sigma_minim} we get the following. If  $(x_1,x_2)$ is a singular point
of the system (\ref{two_equat})  with $\sigma=0$ then
\begin{equation}
\label{sigma=0}
(x_1,x_2,x_3)=\Bigl(q\sqrt {a_2+a_3},q\sqrt{a_1+a_3},q\sqrt{a_1+a_2} \Bigr)
\end{equation}
for a unique $q \in \mathbb{R}$, $q>0$, determined by  the equality $x_3= \varphi(x_1,x_2)$.
\end {remark}
Next we are interested for those values $(a_1,a_2,a_3)\in(0,1/2]\times(0,1/2]\times (0,1/2]$ such that
the system (\ref{two_equat}) has at least one singular point with $\sigma=0$.

\begin{theorem} \label{a_i_ensur_sing_points_sigma=0}
The only two families of the parameters $a_i$, $i=1,2,3$ satisfying the conditions $a_i\in (0,1/2]$
and which can give singular points of the system (\ref{two_equat}) with the property $\sigma=0$, are
the following:
\begin{equation} \label{first_for_sigma=0}
a_1=a_2=a_3=s, \quad s\in (0,1/2],
\end{equation}
\begin{equation} \label{second_for_sigma=0}
a_i=a_j= \frac{(2s^2-1)^2}{8s^2}, \quad a_k= \frac{4s^4+4s^2-1}{8s^2}, \quad s \in (s_1,s_2),
\end{equation}
where  $s_1:= \sqrt{2\sqrt{2}-2}/2$, $s_2:=\sqrt{2}/2$\ \   ($i,j,k \in \{1,2,3\}$,  $i\neq j\neq k\neq i$).
\end{theorem}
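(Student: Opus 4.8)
The starting point is Remark \ref{sigmaziro}: a singular point with $\sigma=0$ must be of the special form $(x_1,x_2,x_3)=\bigl(\sqrt{a_2+a_3},\sqrt{a_1+a_3},\sqrt{a_1+a_2}\bigr)$ up to a positive multiple $q$. So the task reduces to substituting this point into the singular-point equations \eqref{two_equat_sing1} and asking for which $(a_1,a_2,a_3)\in(0,1/2]^3$ the resulting system is consistent (for a suitable $q>0$; recall $q$ is then forced by $x_3=\varphi(x_1,x_2)$, but that equation does not constrain the $a_i$ since $q$ is free). First I would write $u_1=a_2+a_3$, $u_2=a_1+a_3$, $u_3=a_1+a_2$ and plug $x_i=\sqrt{u_i}$ (scaling out $q$, which is legitimate by homogeneity of \eqref{two_equat_sing1}) into both equations of \eqref{two_equat_sing1}. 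This gives two polynomial conditions on $a_1,a_2,a_3$ with square roots $\sqrt{u_1u_2}$, etc.; clearing radicals (squaring appropriately, keeping track of sign conditions since all $u_i>0$) yields genuine polynomial equations $R_1(a_1,a_2,a_3)=0$, $R_2(a_1,a_2,a_3)=0$.

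The next step is to solve this polynomial system. Here I would exploit the symmetry: the full singular-point system is invariant under permuting the three pairs $(a_i,x_i)$, so it is natural to first hunt for solutions on the diagonal $a_1=a_2=a_3=s$ — by symmetry of the special point and the equations, the diagonal should always work, giving family \eqref{first_for_sigma=0}, and one checks $s\in(0,1/2]$ imposes no further restriction (indeed at $a_1=a_2=a_3=s$ the point $(1,1,1)$-type Einstein metric is always present and one verifies $\sigma=0$ there directly). Off the diagonal, the realistic case to handle is $a_i=a_j$ (two equal), since three pairwise distinct $a_i$'s should be excluded. Setting $a_1=a_2=b$, $a_3=c$, the point becomes $(\sqrt{b+c},\sqrt{b+c},\sqrt{2b})$, one equation of \eqref{two_equat_sing1} is automatically satisfied (the "$x_1=x_2$" branch, cf. \eqref{solcase1}), and the other reduces — via \eqref{solcase1.1} or \eqref{solcase1.3} — to a single polynomial relation between $b$ and $c$. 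Solving that relation for $c$ in terms of $b$ (or parametrizing by $s$) should produce exactly $a_i=a_j=(2s^2-1)^2/(8s^2)$, $a_k=(4s^4+4s^2-1)/(8s^2)$; one then determines the range of $s$ for which all three values lie in $(0,1/2]$ and are positive — the constraints $a_k>0$ forces $4s^4+4s^2-1>0$, i.e. $s>s_1=\sqrt{2\sqrt2-2}/2$, and $a_k\le 1/2$ together with $a_i=a_j\le 1/2$ (the latter being $(2s^2-1)^2\le 4s^2$, equivalent to $s<s_2=\sqrt2/2$) pin down the open interval $(s_1,s_2)$.

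Finally I would argue that these are the only families, i.e. there is no $\sigma=0$ singular point with $a_1,a_2,a_3$ pairwise distinct in $(0,1/2]$. The cleanest route is to take the two cleared polynomial equations $R_1=R_2=0$ together with the symmetrized difference, eliminate one variable (a Gröbner basis / resultant computation in {\sl Maple} or {\sl Mathematica}, exactly as already used for Lemma \ref{sing.two}), and check that every component of the solution variety either lies in $\{a_i=a_j\}$ for some pair, or lies outside $(0,1/2]^3$, or corresponds to $q\notin\mathbb{R}_{>0}$. One should also note that when $a_1+a_2+a_3=1/2$ the singular points are explicit from \eqref{solcase2}, so that subcase can be checked by hand against the $\sigma=0$ point.

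\textbf{Main obstacle.} The hard part will be the elimination/consistency analysis in the pairwise-distinct case: after clearing the square roots one is left with a system of two polynomial equations of moderately high degree in three variables, and one must show its only real solutions in the cube $(0,1/2]^3$ are the two listed families — extracting this cleanly (rather than getting lost in spurious branches introduced by squaring, or in components that leave the cube) is where the care is needed. Verifying the precise endpoints $s_1,s_2$ of the interval in \eqref{second_for_sigma=0}, and that the endpoints themselves are correctly excluded or included by the constraints $a_i\in(0,1/2]$, is a routine but delicate bookkeeping step that I would do last.
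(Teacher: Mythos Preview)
Your overall strategy is sound, but the paper takes a cleaner route that sidesteps your main obstacle entirely. Instead of keeping $a_1,a_2,a_3$ as the primary unknowns and substituting $x_i=\sqrt{u_i}$ (which produces mixed radicals $\sqrt{u_iu_j}$ that must be squared away, with the attendant spurious-branch bookkeeping), the paper \emph{changes variables} to $b_i:=\sqrt{a_j+a_k}$, so that $a_i=(b_j^2+b_k^2-b_i^2)/2$ and $x_i=qb_i$. With this substitution the singular-point equations \eqref{two_equat_sing1} become \emph{polynomial in the $b_i$ from the start} --- no radicals at all. Eliminating one $b_i$ at a time then yields equations that factor completely into linear and quadratic factors in $b_i$, and the four resulting families (two of which are discarded because the associated $a_i$ fail positivity) are read off without any case split into ``diagonal / two equal / pairwise distinct'' or any appeal to Gr\"obner bases. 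So where your plan anticipates a potentially messy elimination in the pairwise-distinct case, the paper's substitution makes the full solution set explicit and shows directly that every admissible family has at least two equal $a_i$.

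One small correction to your endpoint analysis: the upper endpoint $s_2=\sqrt{2}/2$ is excluded not because $a_i=a_j\le 1/2$ fails (indeed $(2s^2-1)^2/(8s^2)\to 0$ there), but because $a_i=a_j>0$ fails; similarly $s_1$ is excluded by $a_k>0$. Your stated equivalence ``$(2s^2-1)^2\le 4s^2 \Leftrightarrow s<\sqrt{2}/2$'' is not correct.
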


\begin{proof}
Consider the system of equations
(\ref {two_equat_sing1}) and (\ref {sigma=0}) under the substitutions
$$
b_1:=\sqrt{a_2+a_3}, \quad b_2:=\sqrt{a_1+a_3}, \quad b_3:=\sqrt{a_1+a_2}.
$$
Then
$$
a_i= \Bigl(b_j^2+b_k^2-b_i^2\Bigr)/2, \quad x_i= qb_i,
$$
and  equations (\ref {two_equat_sing1})  take the form
\begin{equation} \label{two_equat_sing1_new}
\begin{array}{l}
2b_1b_3^4+2b_1b_2^4-2b_1^3b_3^2-2b_1^3b_2^2+b_3b_1^2
+b_2b_1^2+b_3b_2^2+b_2b_3^2
-b_2^3-b_3^3-2b_1b_2b_3=0,\\
2b_2b_3^4+2b_2b_1^4-2b_2^3b_3^2-2b_2^3b_1^2+b_3b_2^2+b_1b_2^2
+b_3b_1^2+b_1b_3^2-b_1^3-b_3^3-2b_1b_2b_3=0.\\
\end{array}
\end{equation}
By eliminating $b_1$ (and, independently, $b_2$) from (\ref {two_equat_sing1_new}) we obtain the following two equations:
\begin{equation*} \label{two_equat_sing1_new_sled}
(b_3-b_2)(2b_2^2+2b_2b_3-1)(2b_3^2+2b_2b_3-1)(2b_2^2+2b_3^2+2b_2b_3-1)
(2b_2^2+2b_3^2-2b_2b_3-1)=0,
\end{equation*}
\begin{equation*} \label{two_equat_sing2_new_sled}
(b_3-b_1)(2b_1^2+2b_1b_3-1)(2b_3^2+2b_1b_3-1)(2b_1^2+2b_3^2+2b_1b_3-1)
(2b_1^2+2b_3^2-2b_1b_3-1)=0.
\end{equation*}
Solving this equations together (and
taking into account the equations (\ref {two_equat_sing1_new}))
we get the following four families of solutions of
(\ref {two_equat_sing1_new}), that include solutions under the conditions $b_i>0$ ($i=1,2,3$):
$$
b_1=b_2=b_3=s,
$$
$$
b_i=b_j=s, \quad b_k=\bigl(1-2s^2 \bigl)/(2s),
$$
$$
b_i=\Bigl(s+\sqrt{2-3s^2}\Bigr)/2, \quad b_j=\Bigl(-s+\sqrt{2-3s^2}\Bigr)/2,\quad b_k=s,
$$
$$
b_i=\Bigl(s+\sqrt{2-3s^2}\Bigr)/2, \quad b_j=\Bigl(s-\sqrt{2-3s^2}\Bigr)/2,\quad b_k=s,
$$
where $s \in \mathbb{R}$, $s>0$, \quad $i,j,k \in \{1,2,3\}$, $i\neq j\neq k\neq i$.

Now we return to the original parameters $a_i$ ($i=1,2,3$). From the first family $b_1=b_2=b_3=s$
we obtain (\ref{first_for_sigma=0}).

The second family $b_i=b_j=s$, $b_k=\bigl(1-2s^2 \bigl)/(2s)$ gives (\ref{second_for_sigma=0}).
It is clear that $a_i \in (0,1/2]$ for all $i=1,2,3$ if $s\in (s_1,s_2)$.

The third and fourth families of $b_i$ give
$$
a_i=\frac{s^2-s\sqrt{2-3s^2}}{2}, \quad  a_j=\frac{s^2+s\sqrt{2-3s^2}}{2},   \quad  a_k=-s^2+1/2.
$$
It is easy to show that the system of inequalities $a_i>0$, $i=1,2,3$, is incompatible in this case.
\end{proof}

\begin{remark}\label{sing_points_with_sigma=0}
Now we can find all singular points of the system (\ref{two_equat}) corresponding to the families
(\ref{first_for_sigma=0}), (\ref{second_for_sigma=0}) and having $\sigma=0$.

According to Remark \ref{sigmaziro}
the family (\ref{first_for_sigma=0})  gives a unique  singular point
$(x_1,x_2)=(1,1)$ of~(\ref{two_equat}) satisfying  $\sigma=0$ for all $s\in (0,1/2]$.

Analogously, by Remark \ref{sigmaziro} it follows that the family
(\ref{second_for_sigma=0})  gives only the following singular points of (\ref{two_equat})
satisfying  $\sigma=0$ for all $s \in (s_1,s_2)$:
$$
(2s^2q,2s^2q),\quad \bigl(2s^2q,(1-2s^2)q\bigr), \quad \bigl((1-2s^2)q, 2s^2q\bigr),
$$
where  $q=(2s^2)^{\tfrac{-2(4s^4+4s^2-1)}{(6s^2-1)(2s^2+1)}}(1-2s^2)^{\tfrac{-(2s^2-1)^2}{(6s^2-1)(2s^2+1)}}>0$
is determined by the condition $V\equiv 1$ (see (\ref{volume})).
\end {remark}

\begin {remark} \label{roots_of_Q}
Using Theorem \ref{a_i_ensur_sing_points_sigma=0} and Lemma \ref{sing.two} we can detect all values of
$a_i\in (0,1/2]$, $i=1,2,3$, such that the system (\ref{two_equat}) has at least one singular point with
$\sigma=\delta=0$.
According to Remark \ref{sing_points_with_sigma=0} for all $s\in (0,1/2]$ the family (\ref{first_for_sigma=0})
gives a unique singular point $(x_1,x_2)=(1,1)$ of (\ref{two_equat}) with $\sigma=0$.
In this case $Q$ (see (\ref{singval2})) takes the form
$$
Q=-(2s+1)^4(4s-1)^8,
$$
and the equation $Q=0$ implies that $s=1/4$. Then according to Lemma \ref{sing.two}  the point $(1,1)$ is a degenerate singular
point ($\delta=0$) only for $s=1/4$ (and its type has been determined in Theorem \ref{blow_up}).
For $s\in (0,1/4)\cup(1/4,1/2]$ the point $(1,1)$ is a node.

\smallskip
Analogously, for the family (\ref{second_for_sigma=0}) we have that
$$
Q=s^8(1-8s^2-4s^4)(1-2s^2)^3(3-2s^2)^3,
$$
and the equation $Q=0$ has only three positive roots $\sqrt{2\sqrt{5}-4}/2$, $\sqrt{2}/2$ and
$\sqrt{6}/2$, but none of these values belongs to the interval $(s_1,s_2)$. Therefore,
(\ref{second_for_sigma=0}) can not give singular points of (\ref{two_equat}) with  $\sigma=\delta=0$.
\end {remark}

Next we denote by $S$  the set of points $(a_1,a_2,a_3)$ such that  there is a singular point $(x_1^0,x_2^0)$
of the system
(\ref{two_equat}) with $\rho=0$. Recall  that for points with $a_1a_2+a_1a_3+a_2a_3=0$ the system (\ref{two_equat})
is undefined.

\begin{theorem} \label{sign_rho}
A point $(a_1,a_2,a_3)$  with $a_1a_2+a_1a_3+a_2a_3\neq 0$ and $a_1a_2a_3\neq 0$ lies on the surface $S$ if and only if
\begin{equation}\label{signrho}
Q_1(a_1,a_2,a_3):=4(a_1+a_2)(a_1+a_3)(a_2+a_3)-2a_1-2a_2-2a_3+1=0.
\end{equation}
\end{theorem}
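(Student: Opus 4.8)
The plan is to convert the condition $\rho=0$ at a singular point into a polynomial equation in $(x_1,x_2,x_3)$ and then eliminate the variables $x_i$ to get a condition on $(a_1,a_2,a_3)$ alone. By Lemma \ref{delta_rhon} together with Lemma \ref{sing.zero}, at a singular point $(x_1^0,x_2^0)$ of (\ref{two_equat}), with $x_3^0=\varphi(x_1^0,x_2^0)$, one has
$$
\rho=\widetilde{\rho}=\frac{2F_1}{\mathcal{A}\,x_1^0x_2^0x_3^0},
$$
where $F_1$ is the polynomial (\ref{singval0}) and $\mathcal{A}=a_1a_2+a_1a_3+a_2a_3\neq 0$. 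Since $x_1^0,x_2^0,x_3^0>0$, the equality $\rho=0$ is equivalent to $F_1(x_1^0,x_2^0,x_3^0)=0$. Hence $(a_1,a_2,a_3)$ lies on $S$ exactly when the system formed by the two equations (\ref{two_equat_sing1}) and the equation $F_1=0$ has a solution with positive components: the normalization $x_3=\varphi(x_1,x_2)$ may be dropped, since all three equations are homogeneous of degree $2$ in $(x_1,x_2,x_3)$, so solutions occur in rays and each ray meets the surface $V\equiv 1$ in exactly one point.

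First I would set $x_3=1$ and eliminate $x_1$ and $x_2$ from the three resulting equations with a computer algebra system, in the spirit of the proof of Lemma \ref{sing.two}. I expect the elimination to yield an equation of the form $R(a_1,a_2,a_3)\cdot Q_1(a_1,a_2,a_3)=0$, where $R$ is a product of extraneous factors --- powers of $\mathcal{A}$ and of $a_1a_2a_3$, together with linear factors such as $4a_i^2-1$ and $a_i+a_j$ --- produced by the (de)homogenization and by degenerate configurations. These factors can be discarded for exactly the reasons used in Lemma \ref{sing.two}: $\mathcal{A}=0$ and $a_1a_2a_3=0$ are excluded by hypothesis, while on each of the loci $a_i=\pm 1/2$ and $a_i=-a_j$ a direct substitution into (\ref{two_equat_sing1}) and $F_1=0$ shows that a positive common solution still forces $Q_1=0$; one also uses that both $S$ and $Q_1$ are invariant under the cyclic permutation $a_1\to a_2\to a_3\to a_1$. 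This gives the implication $(a_1,a_2,a_3)\in S\Rightarrow Q_1=0$.

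For the converse, suppose $Q_1(a_1,a_2,a_3)=0$ with $\mathcal{A}\neq 0$ and $a_1a_2a_3\neq 0$. Then (\ref{two_equat_sing1}) and $F_1=0$ have a common solution $(x_1,x_2,x_3)$ after homogenizing back, and it remains to see that it can be chosen with all components positive. Here I would invoke the description of the singular points from Section \ref{sinpsec}: the common solution is in particular a solution of (\ref{two_equat_sing1}), hence, being real, has positive components (positivity of the real solutions of (\ref{two_equat_sing1}) is part of the analysis recalled in Section \ref{sinpsec}), so it is a genuine singular point of (\ref{two_equat}) satisfying $F_1=0$, i.e. $\rho=0$; thus $(a_1,a_2,a_3)\in S$. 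A convenient consistency check is that $Q_1$ rewrites as $4s_1s_2-4s_3-2s_1+1$ in terms of $s_1,s_2,s_3$, which on the diagonal $a_1=a_2=a_3=s$ becomes $32s^3-6s+1$, vanishing at $s=1/4$, in agreement with Theorem \ref{blow_up} (where $J=0$, so $\rho=0$).

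The step I expect to demand the most care is the factorization and the disposal of the extraneous factors $R$ --- i.e. confirming that along the loci where some factor of $R$ vanishes the vanishing of $Q_1$ is still forced (for the ``only if'' direction) and that positivity of the common solution persists (for the ``if'' direction). Everything else reduces to the already-established facts about $F_1$ from Lemma \ref{sing.zero} and about the positive solutions of (\ref{two_equat_sing1}); the resultant computation itself, though bulky, is routine.
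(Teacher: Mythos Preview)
Your approach is essentially the same as the paper's: set $x_3=1$, eliminate $x_1,x_2$ from (\ref{two_equat_sing1}) together with $F_1=0$ via a computer algebra system, and discard the spurious factors. The actual elimination in the paper produces only $(a_1+a_3)(a_2+a_3)(a_1a_2+a_1a_3+a_2a_3)\cdot Q_1=0$, so the extraneous part $R$ is smaller than you anticipate (no $4a_i^2-1$ or $a_1a_2a_3$ factors appear), and the paper disposes of it with the same remark you give about $a_3=-a_1$, $a_3=-a_2$; your discussion of the converse direction is in fact more careful than the paper's own proof.
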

\begin{proof}
The equations (\ref{two_equat_sing1}) and (\ref{singval0})
are homogeneous with respect to $x_1,x_2,x_3$. Now setting $x_3=1$ and eliminating $x_1$ and $x_2$
from the above three equations (using e.~g. {\sl Maple} or {\sl Mathemathica}) we get the equations
$$
(a_1+a_3)(a_2+a_3)(a_1a_2+a_1a_3+a_2a_3)\cdot \bigl(4(a_1+a_2)(a_1+a_3)(a_2+a_3)-2(a_1+a_2+a_3)+1\bigr)=0.
$$
Note that for  $a_3=-a_1$ and $a_3=-a_2$ we have no additional sets of
parameters $(a_1,a_2,a_3)$. This finishes the proof.
\end{proof}
\smallskip

\begin{remark}\label{singdegenp}
It is easy to show that  system (\ref{two_equat}) has a singular point with $\rho=\delta=0$ if and only if
$(a_1,a_2,a_3)=(1/4,1/4,1/4)$ (in this case  system (\ref{two_equat}) has exactly one
degenerate singular point $(x_1,x_2)=(1,1)$, see Section \ref{onespc}).
Indeed, from the equations $\rho=0$ and $\delta=0$ we have $\sigma=\rho^2-4\delta=0$.
According to Remark \ref{roots_of_Q}, the system of  equations $\delta=0$, $\sigma=0$ has a unique solution
$(a_1,a_2,a_3)=(1/4,1/4,1/4)$. It is easy to check that this solution satisfies the equation (\ref{signrho}) as well.
Therefore,  Theorem~\ref{sign_rho} implies that $\rho=0$.
\end{remark}
\smallskip

\section{Singular points for parameters in the set $(0,1/2)^3\setminus \Omega$}

We first discuss a part of the surface $\Omega$ (see Section \ref{degenerset}) in the cube $(0,1/2)^3$.
Recall that $\Omega$ is invariant under the permutation $a_1\to a_2\to a_3\to a_1$. It should be noted that the set
$(0,1/2)^3\cap \Omega$ is connected (it can be shown by lengthy computations
using suitable geometric tools).
There are three curves (``edges'') of {\it singular points} on $\Omega$ (i.~e. points where $\nabla Q =0$):
one of them has parametric representation $a_1=-\frac{1}{2}\frac{16t^3-4t+1}{8t^2-1}, a_2=a_3=t$, and the others
are defined by permutations of $a_i$. These curves  have a common point $(1/4,1/4,1/4)$ which is an elliptic umbilic
on the surface $\Omega$ (see Figure~\ref{singsur}). The part of $\Omega$ in $(0,1/2)^3$
consists of three (pairwise isometric) ``bubbles'' spanned on every pair of ``edges''
(cf. pictures of elliptic umbilics at pp.~64--91 of \cite{WP}).
The Gaussian curvature at every non-singular point of the surface $\Omega\cap (0,1/2)^3$ is negative,
as it could be checked by direct calculations.

\begin{remark}
It should be also noted that the point $(a_1,a_2,a_3)=(1/4,1/4,1/4)$ is the unique singular point of the surface $S$
($\nabla Q_1(1/4,1/4,1/4)=0$).
This is clear by reducing (\ref{signrho}) to the simpler equation
$4z_1z_2z_3-z_1-z_2-z_3+1=0$ using the substitutions $z_1=a_1+a_2, z_2=a_1+a_3, z_3=a_2+a_3$.
It is easy to see that $S$  divides
the cube $[0,1/2]^3$ into three domains $\widetilde{O}_1$, $\widetilde{O}_2$, and $\widetilde{O}_3$ containing the points
$(0,0,0)$, $(1/2,1/2,1/2)$, and $(1/8,1/4,3/8)$ respectively.
\end{remark}

\smallskip
From the above discussion and some geometric considerations (that could be rigorous but very lengthy),
we see that the set $(0,1/2)^3\setminus \Omega$ has exactly three connected components.
Denote by $O_1$, $O_2$, and $O_3$ the components containing the points
$(1/6,1/6,1/6)$, $(7/15,7/15,7/15)$, and $(1/6, 1/4, 1/3)$ respectively.
\smallskip

Let us fix $i \in \{ 1,2,3\}$.
By the definition of $\Omega$, for all points $(a_1,a_2,a_3) \in O_i$  system (\ref{two_equat}) has only
non degenerate singular points.
The number of these points and their corresponding types are the same on each component $O_i$
(under some suitable identification for various values of parameters $a_1,a_2,a_3$).
Therefore, it suffices to check {\it only one point in the  set $O_i$}.

One of the main results of this paper is the following theorem which
clarifies the above observation and provides a general result about the type of the non degenerate
singular points of the system
(\ref{two_equat}).

\begin{center}
\begin{figure}[t]
\centering\scalebox{1}[1]{
\includegraphics[angle=-90,totalheight=2.5in]{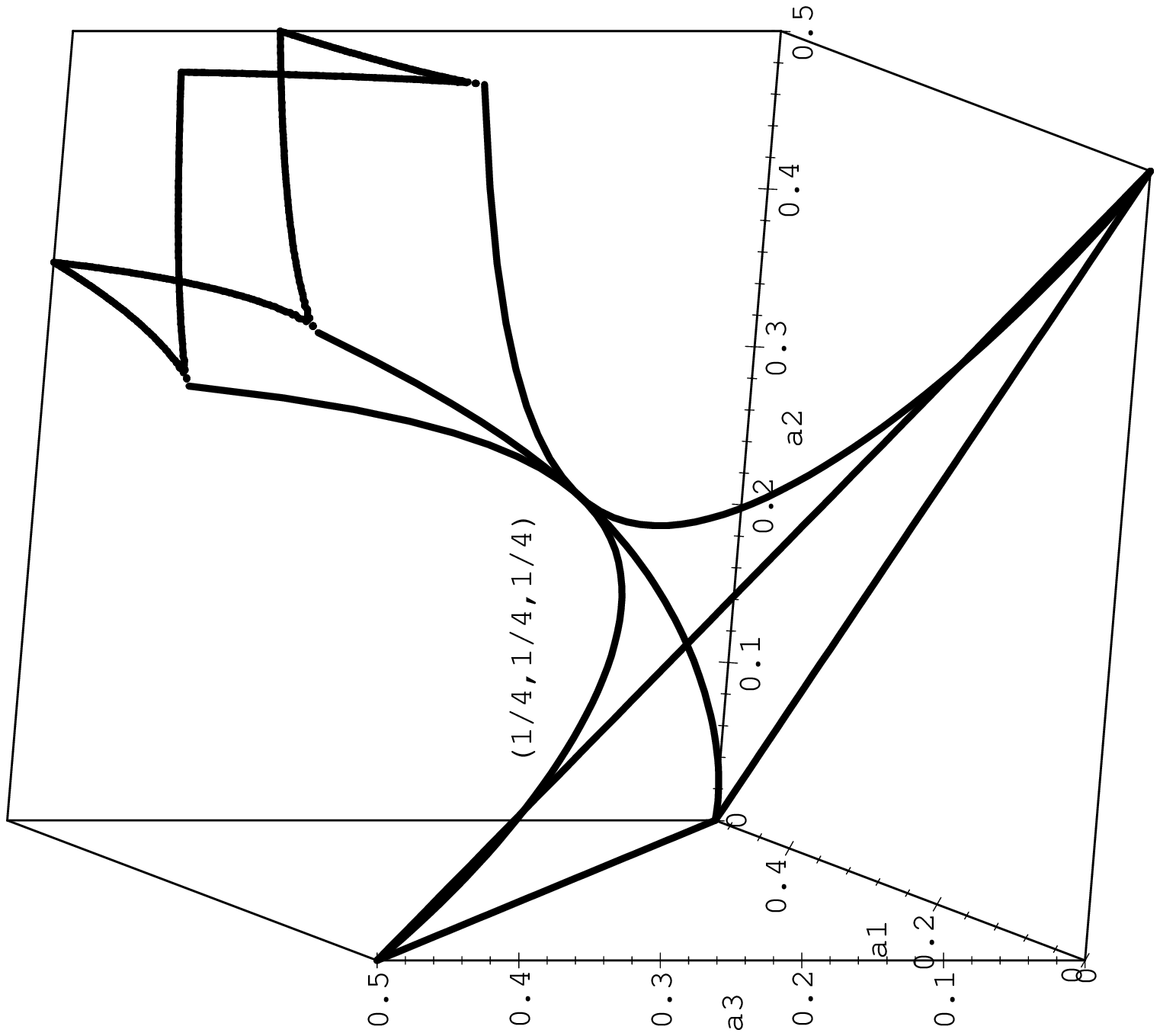}
\includegraphics[angle=-90,totalheight=2.5in]{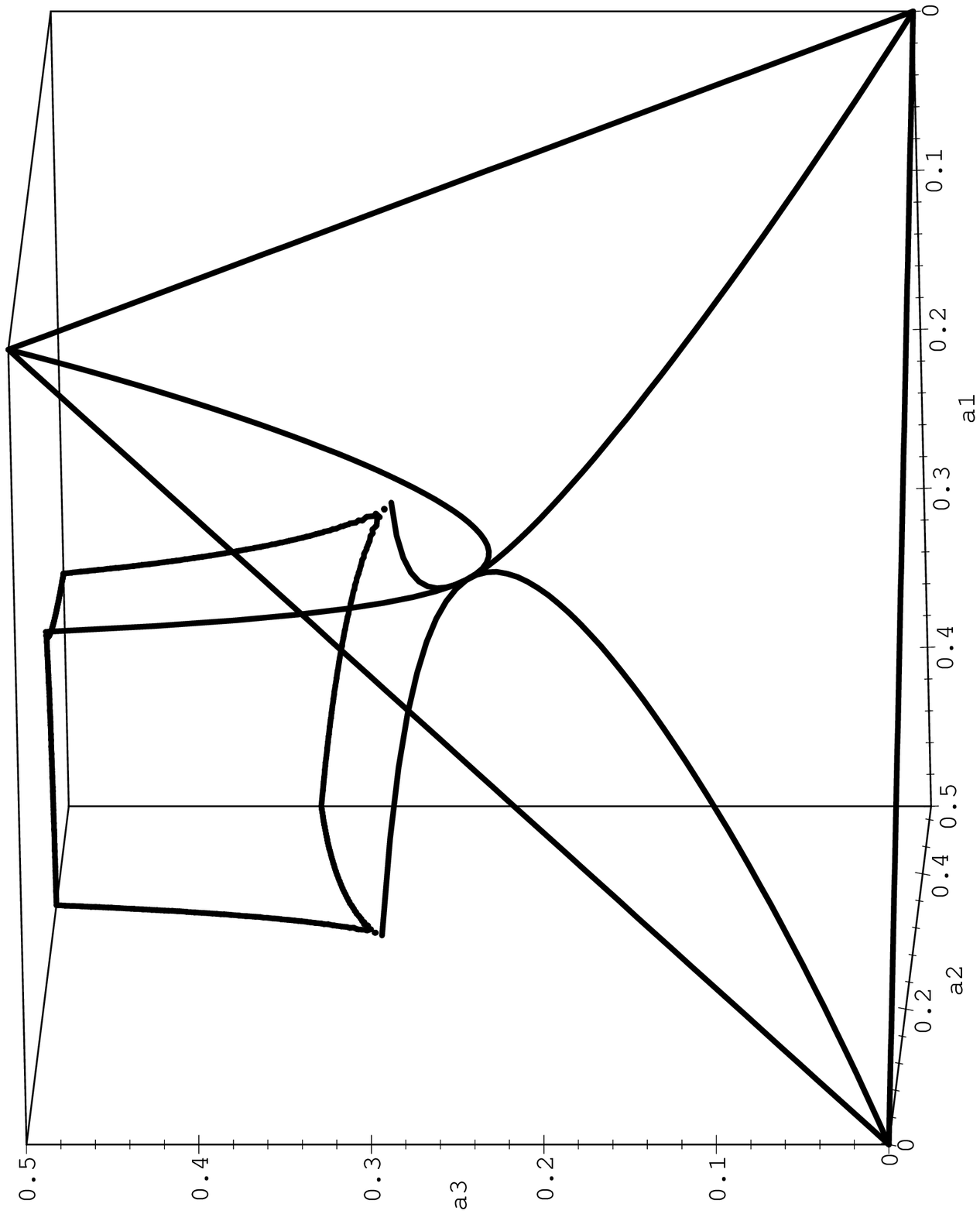}}
\caption{}
\label{singsur}
\end{figure}
\end{center}

\begin{theorem} \label{focus_center}
For $(a_1,a_2,a_3)\in O_i$ the following possibilities for singular points of the system (\ref {two_equat}) can occur:

1) If $i=1$ or $i=2$ then there is one singular point with $\delta>0$ (a node)
and three singular points with $\delta<0$ (saddles);

2) If $i=3$ then there are two singular points with $\delta<0$ (saddles).
\end{theorem}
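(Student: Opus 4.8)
The plan is to exploit the observation, already made in the text, that the number and types of non-degenerate singular points are locally constant on each connected component $O_i$ of $(0,1/2)^3\setminus\Omega$, since crossing $\Omega$ is precisely where a singular point degenerates ($\delta=0$). Hence it suffices to pick one convenient representative $(a_1,a_2,a_3)$ in each $O_i$, to enumerate all singular points of the reduced system (\ref{two_equat}) at that point, and to compute the sign of $\delta$ at each of them; by Theorem \ref{nonode} every non-degenerate singular point with $\delta>0$ is a node and every one with $\delta<0$ is a saddle (recall $\sigma\ge 0$ always holds under $\mathcal{A}>0$, so foci and centers are excluded), so the sign of $\delta$ alone determines the type.

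First I would fix the representatives already named: $(1/6,1/6,1/6)\in O_1$, $(7/15,7/15,7/15)\in O_2$, and $(1/6,1/4,1/3)\in O_3$. For $O_1$ and $O_2$ the parameters are all equal, $a_1=a_2=a_3=a$ with $a=1/6$ and $a=7/15$ respectively, and $a\neq 1/4$ in both cases; so by the discussion in Section \ref{sinpsec} (the case $a_1=a_2=a_3=a$, $a\neq 1/4$) there are exactly four singular points up to scaling, namely those proportional to $(1,1,1)$, $(1-2a,2a,2a)$, $(2a,1-2a,2a)$, $(2a,2a,1-2a)$; after normalizing by $V\equiv 1$ each gives one singular point of (\ref{two_equat}). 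I would then evaluate $\widetilde{\delta}=\delta$ at each of these four points using the closed formula $\widetilde{\delta}=F_2/(\mathcal{A}^2 x_1^2x_2^2x_3^2)$ from Lemma \ref{sing.zero}, where $F_2$ is the explicit polynomial (\ref{singval1}). Since $\mathcal{A}^2x_1^2x_2^2x_3^2>0$, only the sign of $F_2$ matters, and $F_2$ is homogeneous, so I may use the unnormalized representatives $(1,1,1)$, $(1-2a,2a,2a)$, etc., directly. A direct substitution (one symmetric point and one representative of the permutation orbit, by the $S_3$-symmetry of $F_2$) shows that at $(1,1,1)$ one gets $\delta>0$ and at $(1-2a,2a,2a)$ one gets $\delta<0$, for both $a=1/6$ and $a=7/15$; this yields part 1).

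For $O_3$ I would take the representative $(a_1,a_2,a_3)=(1/6,1/4,1/3)$, which has pairwise distinct $a_i$ and satisfies $s_1=a_1+a_2+a_3=3/4\neq 1/2$, so the singular points are governed by the quartic (\ref{solcase3.1}) in $s$ (with $x_3=1$, and $t$ recovered from the first equation of (\ref{solcase3})). Here the key claim is that (\ref{solcase3.1}) has exactly two positive real roots (equivalently two real roots, all real roots being positive by the cited fact from \cite{Lomshakov2}); this can be checked by computing the discriminant $D_3$ of (\ref{solcase3.1}) at these rational $a_i$ and seeing it is nonzero together with a sign-change/Sturm or Descartes count giving two real roots — alternatively one simply factors the explicit quartic with rational coefficients. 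Each of the two roots $s$ produces a singular point $(1,t,1)$ of (\ref{three_equat}), hence one point of (\ref{two_equat}) after the $V\equiv 1$ normalization, and I would evaluate the sign of $F_2$ at each, finding $\delta<0$ at both, hence two saddles, which is part 2).

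The main obstacle is not conceptual but computational bookkeeping: one must be certain the representative points indeed lie in the claimed components $O_i$ (which rests on the asserted — and in the paper only sketched — fact that $(0,1/2)^3\setminus\Omega$ has exactly three components, with $O_1,O_2,O_3$ as described), and one must carry out the root-counting for the quartic (\ref{solcase3.1}) and the sign evaluations of $F_2$ exactly, preferably with a computer algebra system, since numerical evaluation near a potential degeneracy could be misleading. Once the representatives are verified to be in $O_i$ and $\Omega$ is known to separate $\delta>0$ from $\delta<0$ regions, the local constancy of the count and of $\operatorname{sign}(\delta)$ on each $O_i$ (each $O_i$ being connected and $\delta$ being continuous and nonvanishing there) propagates the answer from the single test point to the whole component, and Theorem \ref{nonode} converts signs of $\delta$ into the node/saddle dichotomy, completing the proof.
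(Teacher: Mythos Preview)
Your proposal is correct and follows essentially the same approach as the paper: pick the same three representatives $(1/6,1/6,1/6)$, $(7/15,7/15,7/15)$, $(1/6,1/4,1/3)$, enumerate the singular points (the paper simply solves (\ref{two_equat_sing1}) directly rather than quoting the $(1{-}2a,2a,2a)$ classification or the quartic, but this is the same computation), evaluate the sign of $\delta$ via Lemma~\ref{sing.zero}, and invoke Theorem~\ref{nonode} together with the constancy of the count and of $\operatorname{sign}\delta$ on each connected component. One small slip: in the $O_3$ paragraph you wrote that each root $s$ produces a singular point ``$(1,t,1)$''; it should be $(1,t,s)$.
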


\begin{proof}
By Theorem \ref{nonode} a non degenerate singular point
is either a node (if $\delta >0$) or a saddle (if $\delta <0$).

Recall that for $(a_1,a_2,a_3)\in O_i$ all singular points of the system (\ref {two_equat}) are not degenerate.
Moreover, there are no singular points $(x_1,x_2,x_3)$ with some zero component.
Therefore, {\bf the number of singular points}
and {\bf the set of signs} of $\delta=\widetilde{ \delta}$ for these points is constant on each  component $O_i$.
It is easy to check it for a small neighbourhood of any point
$(a_1,a_2,a_3) \in O_i$ (it follows from {\it the stability of degenerate singular points}),
but it could be spread to all points
of the connected set  $O_i$ via continuous paths (as in standard analytical monodromy theorems).

Consider the component $O_1$ containing the representative point $(1/6,1/6,1/6)$. Then as the calculations show, under $x_3=1$ the
system of equations (\ref{two_equat_sing1}) in the variables $(x_1,x_2)$ has  four solutions, given by
$(1,1)$, $(2,1)$, $(1/2,1/2)$ and  $(1,2)$.
By Lemma \ref{sing.zero} it follows that  the point  $(x_1,x_2)=(1,1)$ corresponds to the value  ${ \delta}=1/9$
(a node with  ${ \rho}=2/3$ and ${\sigma}=0$).
If  $(x_1,x_2)$  is one of the solutions $(2,1)$, $(1/2,1/2)$ and  $(1,2)$, then
${ \delta}$  equals to $-2/9$, $-8/9$ and $-2/9$  respectively (so these points are saddles).

Consider now the component $O_2$ containing the representative point $(7/15,7/15,7/15)$.  By the same manner using
Lemmas \ref{delta_rhon}
and \ref{sing.zero}  we get the  following four solutions $(x_1,x_2)$ of the system
(\ref{two_equat_sing1}):
$(1,1)$ with  ${ \delta}=169/25$ (a node with  ${ \rho}=-26/25$ and  ${\sigma}=0$),
and
$(1/14,1)$, $(1,1/14)$, $(14,14)$  with ${ \delta}$  equal to $-4901/225$,  $-4901/225$
and $-4901/44100$ respectively (three saddles).

Finally, consider the component $O_3$ containing the point $(1/6,1/4,1/3)$.
In this case we get  the following two solutions of the
system  (\ref{two_equat_sing1}):
$(x_1,x_2)=(4/5,3/5)$ with  ${ \delta}=-35/72$ (a saddle);
$(x_1,x_2) \approx (2.284185494, 2.372799295)$ with ${ \delta}=-0.0982$  (a saddle).
\end{proof}
\smallskip

Note that all points $(a_1,a_2,a_3)$ with $a_i>0$ and $a_1+a_2+a_3=1/2$ are in
$O_1$ (see discussion after the proof of Lemma \ref{sing.two}). Therefore,
by Theorem~\ref{focus_center} we get
that among the singular points determined by (\ref{solcase2}) there is one node and three saddles.
It is possible to get more detailed information in this case.
The first family of the solutions (\ref{solcase2}) corresponds to a {\it unstable} node and the other
families give saddles.

Analogous results can be obtained for all other singular points given in Section \ref{sinpsec}.
Namely we have also proved that in the case $D_1>0$ the family (\ref{solcase1.2}) can give only hyperbolic
($\lambda_1 \ne 0$, $\lambda_2 \ne 0$) or semi-hyperbolic ($\lambda_1\lambda_2=0$, $\lambda_1^2+\lambda_2^2 \ne 0$)
singular points of (\ref{two_equat}).
Note that in the hyperbolic case, nodes are stable if $\mu=1+\sqrt{D_1}$ and unstable if $\mu=1-\sqrt{D_1}$.
If $D_1=0$, $a_1=a_2=b\ne 1/4$ then family (\ref{solcase1.2}) also gives hyperbolic or semi-hyperbolic
singular points.
Note that (\ref{solcase1.2}) gives no singular points of (\ref{two_equat}) of the nilpotent type
($\lambda_1=\lambda_2=0$, $J\ne 0$, see~(\ref{linear})).

We plan to discuss results of such kind in another paper. Recall also that we have considered  in Section \ref{onespc}
the special case $D_1=0$, $b=1/4$ with a unique linearly zero ($J=0$) singular point
of (\ref{two_equat}).

\section*{Conclusion}

Theorem \ref{focus_center} gives a general picture for types of singular points of the system (\ref{two_equat})
with $(a_1,a_2,a_3)\in (0,1/2)\times (0,1/2)\times (0,1/2)$.
Nevertheless, it would be interesting to study ``degenerate'' sets of parameters $(a_1,a_2,a_3)$ from
the set $(0,1/2]^3\cap \Omega$. For the point $(a_1,a_2,a_3)=(1/4,1/4,1/4)$ we
obtained suitable results in Section \ref{onespc}.
It should be noted that the point $(a_1,a_2,a_3)=(1/4,1/4,1/4)$ is a very special one on the algebraic surface
$$
\Omega=\{(a_1,a_2,a_3)\in \mathbb{R}^3\,|\,Q(a_1,a_2,a_3)=0\}
$$
(see Section \ref{degenerset} and also Remark \ref{singdegenp}).
The following questions are worth for a further investigation.

\begin{ques}
Find a tool to study points $(a_1,a_2,a_3)$ of $\Omega$ for determining the type of singular points $(x_1,x_2)$
of the system (\ref{two_equat}).
\end{ques}

We specify this question for some special cases.

\begin{ques}
Determine the type of singular points for the case $a_i=a_j$, $i \neq j$, for $(a_1,a_2,a_3)\in \Omega$.
\end{ques}

\begin{ques}
Study the case when $(a_1,a_2,a_3)\in \Omega$ lies on one of the three curves of degenerate points of $\Omega$
($a_i=-\frac{1}{2}\frac{16t^3-4t+1}{8t^2-1}, a_j=a_k=t$, $i \neq j \neq k \neq i$).
What is the number of singular points for such $(a_1,a_2,a_3)$?
\end{ques}

\begin{ques}
Suppose that $(a_1,a_2,a_3)\in \Omega \cap (0,1/2)^3$ does not lie on the three curves of degenerate points of $\Omega$.
Then, is the number of singular points still equal to $3$?
\end{ques}

\begin{ques}
Study the case $a_k=1/2$, $a_i,a_j \in (0,1/2]$, $i \neq j \neq k \neq i$.
\end{ques}

\bigskip
{\bf Acknowledgement.}
The authors are indebted to
Prof. Yusuke~Sakane for useful discussions concerning computational aspects of this project.

\vspace{20mm}

\bibliographystyle{amsunsrt}

\vspace{5mm}

\end{document}